%
%
\documentclass[12pt]{amsart}
\usepackage{amssymb,amsmath}
\usepackage{enumitem,graphicx}

\oddsidemargin=-.0cm
\evensidemargin=-.0cm
\textwidth=16cm
\textheight=22cm
\topmargin=0cm

\def\C {{\mathbb C}}
\def\H {{\mathcal H}}

\def\A {{\mathbb A}}
\def\R {\mathbb{R}}

\def\D {{{D}}}
\def\i {{{i}}}
\def\z {{\boldsymbol{z}}}
\def\Re {\mathfrak{Re\,}}

\def \l {\langle}
\def \r {\rangle}
\def \and {{\qquad\text{and}\qquad}}

\newtheorem{proposition}{Proposition}[section]
\newtheorem{theorem}[proposition]{Theorem}
\newtheorem{corollary}[proposition]{Corollary}
\newtheorem{lemma}[proposition]{Lemma}
\theoremstyle{definition}
\newtheorem{definition}[proposition]{Definition}
\newtheorem{remark}[proposition]{Remark}
\newtheorem*{Acknowledgments}{Acknowledgments}
\numberwithin{equation}{section}

\def \au {\rm}
\def \ti {\it}
\def \jou {\rm}
\def \bk {\it}
\def \no#1#2#3 {{\bf #1} (#3), #2.}
\def \eds#1#2#3 {#1, #2, #3.}

\title[Euler-Bernoulli beam with localized structural damping]
{Gevrey regularity for the Euler-Bernoulli beam\\ equation
with localized structural damping}

\author[M. Caggio and F. Dell'Oro]
{Matteo Caggio and Filippo Dell'Oro}

\address{Institute of Mathematics of the Academy of Sciences of the Czech Republic
\newline\indent
\v{z}itn\'a 25, 115 67 Praha 1, Czech Republic}
\email{caggio@math.cas.cz {\rm (M.\ Caggio)}}

\address{Politecnico di Milano - Dipartimento di Matematica
\newline\indent
Via Bonardi 9, 20133 Milano, Italy}
\email{filippo.delloro@polimi.it {\rm (F.\ Dell'Oro)}}

\subjclass[2010]{35B65, 35B35, 47D06, 74K10}
\keywords{Euler-Bernoulli beam, localized structural damping, Gevrey class, differentiability,
exponential stability}

\begin{document}

\begin{abstract}
We study a Euler-Bernoulli beam equation
with localized discontinuous structural damping. 
As our main result, we prove that the associated 
$C_0$-semigroup $(S(t))_{t\geq0}$ is of Gevrey class $\delta>24$ for $t>0$, 
hence immediately differentiable. Moreover, we show that 
$(S(t))_{t\geq0}$ is exponentially stable.
\end{abstract}

\maketitle

\section{Introduction}

\noindent
We analyze the regularity and the stability properties of the $C_0$-semigroup associated to
a Euler-Bernoulli beam equation with localized discontinuous structural damping.
Denoting by $\ell>0$ the length of the beam and fixing $0<\ell_0<\ell$, 
the corresponding model reads
\begin{equation}
\label{E1}
u_{tt}(x,t) + u_{xxxx}(x,t) - (a(x)u_{tx}(x,t))_{x} = 0,
\end{equation}
where $x\in(0,\ell_0)\cup(\ell_0,\ell)$ and $t>0$. 
The unknown $u(x,t)$ represents the vertical deflection of the beam
with respect to its reference configuration, while $a(x)$ is the damping function.
We assume that $a(x)$ has the form
$$
a(x)=\begin{cases}
0\quad\,\,\, {\rm if}\,\,\, x\in(0,\ell_0),\\
1\quad\,\,\, {\rm if}\,\,\, x\in(\ell_0,\ell),
\end{cases}
$$
meaning that the dissipation is localized on the interval
$(\ell_0,\ell)$.
We also prescribe the following transmission conditions at the interface
\begin{equation}
\label{trans}
\begin{cases}
u(\ell_0^-,t)=u(\ell_0^+,t),\qquad \quad\,\,\,\, u_x(\ell_0^-,t)=u_x(\ell_0^+,t),\\
u_{xx}(\ell_0^-,t)=u_{xx}(\ell_0^+,t),
\qquad u_{xxx}(\ell_0^-,t) = u_{xxx}(\ell_0^+,t) - u_{tx}(\ell_0^+,t).
\end{cases}
\end{equation}
Finally, we impose the clamped boundary conditions
\begin{equation}
\label{E2}
u(0,t)=u_x(0,t)=u(\ell,t)=u_x(\ell,t) =0,
\end{equation}
and the initial conditions 
\begin{equation}
\label{E3}
u(x,0)=u_0(x),\qquad u_t(x,0) = U_0(x),
\end{equation}
where $u_0,U_0$ are assigned data. 

The analysis of solutions to one-dimensional and multi-dimensional Euler-Bernoulli 
equations with different damping mechanisms is a classical topic in PDEs.
The reader may consult for instance \cite{BELA,KOM,LAG,LAS1,LAS} and many references therein for 
an overview of the relevant literature. The specific problem \eqref{E1}-\eqref{E3} treated here
has been previously studied in \cite{DENK}, actually in a more general higher-dimensional version,
to which we refer for detailed physical motivations.
In that paper, it was shown that the associated contraction $C_0$-semigroup is exponentially stable 
provided that the damping function is supported near the whole boundary of the spatial domain. 
The methodology of \cite{DENK} consists in proving that the resolvent of the semigroup generator 
along the imaginary axis is bounded, which allows to apply the classical
Gearhart-Pr\"{u}ss theorem~\cite{GER,PRU} and deduce the exponential
stability. The case when the damping functions is supported near an arbitrary small portion of the boundary 
has been recently analyzed in \cite{AHR} under different boundary conditions
(hinged rather than clamped). By means of suitable Carleman estimates, the authors of \cite{AHR}
proved that the resolvent of the semigroup generator 
along the imaginary axis grows at most at an exponential rate. Combined
with a well-known result of Burq~\cite{BURQ}, this leads to a logarithmic (semiuniform) decay of the semigroup. 
Other works related to Euler-Bernoulli 
equations with locally distributed structural damping include \cite{BARRAZA,TEB}.

The aim of present paper is to analyze both the stability and the
regularity properties of the contraction $C_0$-semigroup 
$(S(t))_{t\geq0}$ associated to \eqref{E1}-\eqref{E3}. As remarked in \cite{LIULIU},
such a semigroup is not analytic. Still, we show here that $(S(t))_{t\geq0}$ 
is of Gevrey class $\delta>24$ for $t>0$. In particular, $(S(t))_{t\geq0}$ turns out to be immediately differentiable,
which implies an instantaneous smoothing effect on the initial data. Moreover, the spectrum determined growth condition
(otherwise called the linear stability property) is satisfied, meaning that the growth bound of the semigroup
coincides with the spectral bound of its generator. 
Finally, we prove that $(S(t))_{t\geq0}$ is exponentially stable. 
Note that in our case the damping function $a(x)$ vanishes on $(0,\ell_0)$, and thus 
it is not supported near the whole boundary of the spatial domain $(0,\ell)$. For this reason,
the exponential stability of $(S(t))_{t\geq0}$ cannot be deduced directly from the results of \cite{DENK}. 
The method used here to prove the aforementioned Gevrey regularity of 
$(S(t))_{t\geq0}$ consists in showing that the resolvent of the semigroup generator 
along the imaginary axis decays at a proper polynomial rate, which permits
to reach the desired conclusion by invoking an abstract result of Taylor~\cite{TAYLOR}.
 
In order to gain a better understanding of our results, we shall compare them with 
the analogous ones obtained for the case of the Kelvin-Voigt damping, namely, when the damping
term and the leading elastic term have the same spatial order (four).
As shown in \cite[Theorem~4.1]{LIULIU}, the $C_0$-semigroup associated to
the Euler-Bernoulli beam equation with localized discontinuous Kelvin-Voigt damping is exponentially stable but not 
analytic. Still, it has been recently proved in \cite{SOZZO} that such a semigroup is of Gevrey class $\delta > 8$.
The difference between the two Gevrey orders of regularity (order $8$ for the Kelvin-Voigt damping and order $24$
for the structural damping) reflects the fact 
that the structural damping is weaker than the Kelvin-Voigt one.
Indeed, the Gevrey order is a way to ``measure" the divergence degree of the power series expansion of a non-analytic
function (the larger the order, the ``more divergent" the power expansion). 
From the technical viewpoint, the worse regularizing effect of the structural damping
represents the main difficulty with respect to the analysis 
carried out in \cite{LIULIU,SOZZO}.
More precisely, one needs a proper estimate of the solution 
in the damped region of the spatial domain, which in our case 
translates into an estimate of $u$ in the Sobolev space $H^2(\ell_0,\ell)$.
Such an estimate can be obtained basically for free for the Kelvin-Voigt damping,
but this is not the case for the structural damping treated here. 

The paper is organized as follows. In the next Section \ref{SEC2} we rewrite the problem 
as a coupled PDE system in two unknown variables $v$ and $w$. In the subsequent 
Section~\ref{SEC3} we recast such a system as 
an abstract Cauchy problem and we prove the existence of the contraction $C_0$-semigroup 
$(S(t))_{t\geq0}$.
We also show that the spectrum of the semigroup generator is contained in the open 
left half-plane $\C^{-}$, meaning in particular that the imaginary axis is spectrum free.
In Section \ref{SEC4} we state the main Theorem~\ref{MAINTH} 
concerning the Gevrey regularity of $(S(t))_{t\geq0}$, together with the corollaries
concerning the differentiability, the spectrum determined growth condition and the exponential stability. 
The final Section \ref{SEC5} is devoted to the proof of Theorem \ref{MAINTH}.

\section{Preliminaries}
\label{SEC2}

\subsection{Rewriting the problem} 
Splitting the unknown variable $u$ as
$$
u(x,t)=
\begin{cases}
v(x,t)\quad\,\,\, \,{\rm if}\,\,\, x\in(0,\ell_0)\\
w(x,t)\quad\,\,\, {\rm if}\,\,\, x\in(\ell_0,\ell),
\end{cases}
$$
we rewrite equation \eqref{E1} as the coupled PDE system (in the sequel we shall write $v$ and $w$ 
instead of $v(x,t)$ and $w(x,t)$ to avoid cumbersome notation)
\begin{equation}
\label{E4}
\begin{cases}
v_{tt} + v_{xxxx} = 0\qquad &{\rm in}\,\,\, (0,\ell_0)\times (0,\infty),\\
w_{tt} + w_{xxxx} - w_{txx} = 0\qquad &{\rm in}\,\,\, (\ell_0,\ell) \times (0,\infty).
\end{cases}
\end{equation}
The transmission conditions \eqref{trans} take the form
\begin{equation}
\label{E5}
\begin{cases}
v(\ell_0,t)=w(\ell_0,t),\qquad \quad\,\,\,\, v_x(\ell_0,t)=w_x(\ell_0,t),\\
v_{xx}(\ell_0,t)=w_{xx}(\ell_0,t),
\qquad v_{xxx}(\ell_0,t) = w_{xxx}(\ell_0,t) - w_{tx}(\ell_0,t),
\end{cases}
\end{equation}
while the boundary conditions \eqref{E2} read
\begin{equation}
\label{E2rew}
v(0,t)=v_x(0,t)=w(\ell,t)=w_x(\ell,t) =0.
\end{equation}
Finally, the initial conditions \eqref{E3} can be written in terms of $v$ and $w$ as
\begin{equation}
\label{E3rew}
\begin{cases}
v(x,0)=v_0(x),\qquad\,\,\, v_t(x,0) = V_0(x),\\
w(x,0)=w_0(x),\qquad w_t(x,0) = W_0(x),
\end{cases}
\end{equation}
where the functions $v_0,V_0,w_0,W_0$ are related to $u_0$ and $U_0$ in an obvious way.

\subsection{Functional setting} 
We consider the (complex) Hilbert spaces
\begin{align*}
H^2_l(0,\ell_0) &=\{v \in H^2(0,\ell_0) : v(0)=v_{x}(0)=0 \},\\
H^2_r(\ell_0,\ell) &=\{w \in H^2(\ell_0,\ell) : w(\ell)=w_x(\ell)=0 \},
\end{align*}
equipped with the norms 
$$\|v\|_{H^2_l(0,\ell_0)}=\|v_{xx}\|_{L^2(0,\ell_0)}\qquad\text{and}\qquad
\|w\|_{H^2_r(\ell_0,\ell)}=\|w_{xx}\|_{L^2(\ell_0,\ell)}.
$$
Introducing the variable $\z=(v,V,w,W)$, the 
state space associated to problem \eqref{E4}-\eqref{E3rew} is the (complex) Hilbert space
$$
\H =
\left\{ \z\in 
H^2_l(0,\ell_0) \times L^2(0,\ell_0)\times  H^2_r(\ell_0,\ell)  \times L^2(\ell_0,\ell) \,\, \Big|\,\,
\begin{matrix}
v(\ell_0)=w(\ell_0)\\
v_{x}(\ell_0)=w_{x}(\ell_0)
\end{matrix}
\right\},
$$
equipped with the norm
$$
\|\z\|_{\H}^2 = \|v_{xx}\|_{L^2(0,\ell_0)}^2 + \|V\|_{L^2(0,\ell_0)}^2 +
\|w_{xx}\|_{L^2(\ell_0,\ell)}^2 + \|W\|_{L^2(\ell_0,\ell)}^2.
$$

\subsection{Two technical tools}
The following functional inequalities 
(see e.g. \cite[Ch.\ 5]{ADAMS}) will be crucial for our purposes.

\begin{lemma}[Interpolation]
\label{interpol}
Let $I\subset \R$ be an interval and let $0\leq j< k < m$.
Then there exists a constant $c>0$ such that
$$
\|f\|_{H^k(I)} \leq c \|f\|_{H^j(I)}^{1-\theta} \|f\|_{H^m(I)}^{\theta}\qquad \forall f \in H^m(I),
$$
where $\theta= (k-j)/(m-j)$.
\end{lemma}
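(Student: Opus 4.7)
The plan is to reduce the inequality on $I$ to the analogous inequality on the whole line $\R$, and then handle the latter with a one-line Hölder argument on the Fourier side. For the reduction step, I would invoke a standard simultaneous Sobolev extension theorem (Stein's universal extension operator, or, in the case of a bounded interval, an elementary higher-order reflection across the endpoints): this yields a bounded linear operator $E:H^m(I)\to H^m(\R)$ which also extends boundedly from $H^s(I)$ to $H^s(\R)$ for $s=j$ and $s=m$. Since restriction to $I$ is trivially a contraction $H^k(\R)\to H^k(I)$, any inequality of the claimed form on $\R$ applied to $Ef$ transfers to $f$ on $I$, at the price of an adjusted constant depending only on the extension norms.

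For the whole-line estimate I would work with the Fourier characterization
$$
\|g\|_{H^s(\R)}^2=\int_{\R}(1+|\xi|^2)^s|\hat g(\xi)|^2\,d\xi.
$$
The exponent $\theta=(k-j)/(m-j)$ is exactly the one for which $k=(1-\theta)j+\theta m$, so the weight factorizes as
$$
(1+|\xi|^2)^k|\hat g(\xi)|^2=\bigl[(1+|\xi|^2)^j|\hat g(\xi)|^2\bigr]^{1-\theta}\bigl[(1+|\xi|^2)^m|\hat g(\xi)|^2\bigr]^{\theta}.
$$
Applying Hölder's inequality with conjugate exponents $1/(1-\theta)$ and $1/\theta$ (both strictly greater than $1$ by the strict ordering $j<k<m$) gives
$$
\|g\|_{H^k(\R)}^2\leq \|g\|_{H^j(\R)}^{2(1-\theta)}\|g\|_{H^m(\R)}^{2\theta},
$$
and taking square roots produces the desired inequality on $\R$.

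Combining the two steps completes the argument. The Hölder step is essentially bookkeeping once $\theta$ is identified, so the only genuine technical input is the simultaneous Sobolev extension; this is completely standard and covered in \cite[Ch.~5]{ADAMS}, and in fact is the only place where the assumption that $I$ is an interval (rather than an arbitrary open set) plays a role. I therefore do not anticipate any real obstacle: the proof is a textbook application of Fourier-side interpolation combined with a Sobolev extension theorem.
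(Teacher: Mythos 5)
Your argument is correct. A point of comparison: the paper does not prove this lemma at all --- it is stated as a known fact with a pointer to Adams--Fournier, Ch.~5, where the interpolation inequality for intermediate derivatives is obtained by elementary one-dimensional calculus estimates (integration by parts on subintervals) rather than by Fourier analysis. Your route --- a simultaneous extension $E:H^s(I)\to H^s(\R)$ for $s=j,m$ (Stein, or Hestenes-type higher-order reflection for a bounded interval or half-line, with $E$ the identity if $I=\R$), followed by the exact convexity identity $k=(1-\theta)j+\theta m$ and H\"older with exponents $1/(1-\theta)$, $1/\theta$ on the Fourier side --- is a clean and fully correct alternative; it even yields constant $1$ on $\R$, with the final constant $c$ depending only on the two extension norms, hence on $I$, $j$, $m$, exactly as the lemma allows. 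The one genuinely delicate point, that a \emph{single} operator $E$ must be bounded at both regularity levels simultaneously (otherwise the transfer step fails), you identify and handle correctly. As a minor remark, the Fourier argument also gives the statement for non-integer exponents, which is more than the paper needs (it only uses integer $j,k,m$), so your proof is if anything slightly more general than the cited one.
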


\begin{lemma}[Gagliardo-Nirenberg] 
\label{gagl}
Let $I\subset \R$ be a bounded interval. 
Then there exists a constant $c>0$ such that
$$
\|f\|_{L^\infty (I)} \leq c \|f\|_{L^2(I)}^{1/2} \|f\|_{H^1(I)}^{1/2}\qquad \forall f \in H^1(I).
$$
\end{lemma}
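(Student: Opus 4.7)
The plan is to prove Lemma~\ref{gagl} by the classical fundamental-theorem-of-calculus argument in one spatial dimension, which is really the simplest instance of the Gagliardo--Nirenberg family. The starting point is the one-dimensional Sobolev embedding $H^1(I)\hookrightarrow C(\overline{I})$: every $f\in H^1(I)$ admits a continuous representative, and for any two points $x,y\in I=(a,b)$ it satisfies the absolutely continuous identity
$$
f(x)^2-f(y)^2 \;=\; 2\int_{y}^{x} f(t)\,f'(t)\,dt.
$$

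The first step I would take is to average this identity in $y$ over $I$. Dividing by $|I|$ and applying Cauchy--Schwarz to the double integral on the right gives the pointwise bound
$$
|f(x)|^{2} \;\leq\; \frac{1}{|I|}\,\|f\|_{L^{2}(I)}^{2} \;+\; 2\,\|f\|_{L^{2}(I)}\,\|f'\|_{L^{2}(I)}.
$$
The second step is purely cosmetic: since $\|f\|_{L^{2}(I)}\leq\|f\|_{H^{1}(I)}$, the first summand is dominated by $|I|^{-1}\|f\|_{L^{2}(I)}\|f\|_{H^{1}(I)}$, while the second is obviously bounded by $2\|f\|_{L^{2}(I)}\|f\|_{H^{1}(I)}$. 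Taking the supremum in $x$ and then a square root yields the desired inequality with explicit constant $c=(|I|^{-1}+2)^{1/2}$.

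An equivalent route, slightly cleaner conceptually but with a less explicit constant, is to apply a bounded extension operator $E\colon H^{1}(I)\to H^{1}(\mathbb{R})$ and reduce to the whole line. On $\mathbb{R}$ one uses the elementary estimate $\|g\|_{L^{\infty}(\mathbb{R})}^{2}\leq 2\,\|g\|_{L^{2}(\mathbb{R})}\,\|g'\|_{L^{2}(\mathbb{R})}$, which for $g\in C_c^{\infty}(\mathbb{R})$ follows at once from $g(x)^{2}=\int_{-\infty}^{x}2gg'\,dt$ and Cauchy--Schwarz, and is extended to $H^{1}(\mathbb{R})$ by density.

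There is no genuine obstacle here; the one delicate point is the choice of a continuous representative of $f$ so that the pointwise identity above is meaningful, which is taken care of by the embedding $H^{1}(I)\hookrightarrow C(\overline{I})$ valid in one dimension. Everything else reduces to the fundamental theorem of calculus and Cauchy--Schwarz, so the main work is really bookkeeping of the constant.
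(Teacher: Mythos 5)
Your proof is correct. Note, however, that the paper does not prove this lemma at all: it is quoted as a standard functional inequality with a reference to Adams--Fournier, \emph{Sobolev Spaces}, Ch.~5, and is then used as a black box throughout Section~5. So there is no internal proof to compare against; what you supply is a self-contained elementary argument, and it is the right one for dimension one. The averaging-in-$y$ of the identity $f(x)^2-f(y)^2=2\int_y^x f f'$, followed by Cauchy--Schwarz, is exactly the classical proof of the interpolation inequality $\|f\|_{L^\infty(I)}^2\leq c\,\|f\|_{L^2(I)}\|f\|_{H^1(I)}$ on a bounded interval, and your bookkeeping of the constant $c=(|I|^{-1}+2)^{1/2}$ is accurate; the averaging term is also what makes the estimate valid without any vanishing assumption at the endpoints, which the extension-operator route achieves instead through boundedness of $E$. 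Two small remarks: the justification that the pointwise identity holds for every $f\in H^1(I)$ is that $f$ (its continuous representative) is absolutely continuous and $f^2\in W^{1,1}(I)$ with $(f^2)'=2ff'$, which you implicitly use and could state; and since the paper works in complex Hilbert spaces, the identity should be written as $|f(x)|^2-|f(y)|^2=2\,\Re\int_y^x f'(t)\overline{f(t)}\,dt$, which changes nothing in the estimates.
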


\subsection{General agreements}
Throughout the paper, the H\"older and Poincar\'e inequalities 
will be tacitly used several times.
Given $X,Y\geq0$, we will also employ the Young inequality (often without explicit mention) 
\begin{equation}
\label{young}
X^{\,2/p}\,Y^{\,2/q} \leq X^2 + Y^2 \qquad\text{where}\qquad \frac{1}{p}+\frac{1}{q}=1.
\end{equation}
Finally, we denote by $L(\H)$ the space of bounded linear operators on $\H$.

\section{The Semigroup and Its Generator}
\label{SEC3} 

\noindent
Setting $\z(t)=(v(t),V(t),w(t),W(t))^{\rm T}\in\H$,
we may rewrite problem \eqref{E4}-\eqref{E3rew} as 
$$
\begin{cases}
\z'(t)= \A \z(t),\quad t>0,\\ 
\z(0)=\z_0,
\end{cases}
$$
where $\z_0=(v_0,V_0,w_0,W_0)^{\rm T}\in\H$ and the 
operator $\A:\D(\A) \subset \H \to \H$ is defined as
$$
\A\left(\begin{matrix}
v\\\noalign{\vskip1mm}
V\\
w\\\noalign{\vskip1.3mm}
W
\end{matrix}
\right)
=\left(
\begin{matrix}
V\\
-v_{xxxx}\\\noalign{\vskip1.5mm}
W\\
-w_{xxxx} + W_{xx}
\end{matrix}
\right)
$$
with domain
$$
\D(\A) =
\left\{\z\in\H \left|\,\,
\begin{matrix}
v \in H^4(0,\ell_0)\\
V \in H^2_l(0,\ell_0)\\
w \in H^4(\ell_0,\ell)\\
W \in H^2_r(\ell_0,\ell)\\
V(\ell_0)=W(\ell_0)\\
V_x(\ell_0)=W_x(\ell_0)\\
v_{xx}(\ell_0)=w_{xx}(\ell_0)\\
v_{xxx}(\ell_0) = w_{xxx}(\ell_0) - W_{x}(\ell_0)\\
\end{matrix}\right.
\right\}.
$$
Denoting by $\l \cdot, \cdot \r_{\H}$ the inner product associated to $\|\cdot\|_{\H}$, 
a direct calculation yields
\begin{equation}
\label{DISSIP}
\Re \l \A \z , \z \r_{\H} = -\|W_{x}\|_{L^2(\ell_0,\ell)}^2\leq0,\quad\,\, \forall \z \in \D(\A).
\end{equation}
In particular, this tells that $\A$ is a dissipative operator. Further properties of $\A$
are contained in the following result.

\begin{lemma}
\label{propA}
The operator $\A$ is bijective and the inverse operator $\A^{-1}$ is compact.
\end{lemma}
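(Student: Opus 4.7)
The plan is to solve the equation $\A\z = \z^*$ explicitly for every $\z^* = (v^*,V^*,w^*,W^*) \in \H$ and then to deduce compactness of $\A^{-1}$ from Rellich--Kondrachov. Unpacking the definition of $\A$, the equation $\A\z = \z^*$ forces $V = v^*$ and $W = w^*$ (both already belong to $H^2_l(0,\ell_0)$ and $H^2_r(\ell_0,\ell)$ as required by $\D(\A)$, and automatically satisfy $V(\ell_0) = W(\ell_0)$ and $V_x(\ell_0) = W_x(\ell_0)$ by the very definition of $\H$), and reduces the remaining unknowns $(v,w)$ to the coupled fourth order elliptic system $v_{xxxx} = -V^*$ on $(0,\ell_0)$ and $w_{xxxx} = w^*_{xx} - W^*$ on $(\ell_0,\ell)$, subject to the clamped conditions at $0$, $\ell$ and to the four transmission conditions at $\ell_0$ encoded in $\D(\A)$.

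For this elliptic subproblem I would adopt a variational formulation on the closed subspace
\[
\H_0 = \{(\phi,\psi) \in H^2_l(0,\ell_0) \times H^2_r(\ell_0,\ell) : \phi(\ell_0) = \psi(\ell_0),\ \phi_x(\ell_0) = \psi_x(\ell_0)\},
\]
endowed with the Hilbertian norm $\|\phi_{xx}\|_{L^2}^2 + \|\psi_{xx}\|_{L^2}^2$. Testing the two PDEs against $(\phi,\psi) \in \H_0$ and integrating by parts twice (using the matchings at $\ell_0$ built into $\H_0$, together with one extra integration by parts on the source $w^*_{xx}$, whose trace at $\ell_0$ precisely absorbs the boundary contribution coming from the transmission condition $v_{xxx}(\ell_0) - w_{xxx}(\ell_0) = -W_x(\ell_0)$), the weak formulation takes the clean form
\[
\int_0^{\ell_0} v_{xx}\bar\phi_{xx}\,dx + \int_{\ell_0}^\ell w_{xx}\bar\psi_{xx}\,dx = -\int_0^{\ell_0} V^*\bar\phi\,dx - \int_{\ell_0}^\ell w^*_x\bar\psi_x\,dx - \int_{\ell_0}^\ell W^*\bar\psi\,dx.
\]
The bilinear form on the left is trivially continuous and coercive, while the antilinear form on the right is bounded on $\H_0$ by Cauchy--Schwarz and Poincar\'e (the clamped conditions at $0$ and $\ell$ make all Poincar\'e constants available). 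Lax--Milgram therefore delivers a unique weak solution $(v,w) \in \H_0$. Localized test pairs recover the pointwise fourth order equations, which in one dimension immediately upgrade $v$ to $H^4(0,\ell_0)$ and $w$ to $H^4(\ell_0,\ell)$; integrating by parts back and varying $\phi(\ell_0)$ and $\phi_x(\ell_0)$ independently then produces the two non-automatic transmission conditions $v_{xx}(\ell_0) = w_{xx}(\ell_0)$ and $v_{xxx}(\ell_0) = w_{xxx}(\ell_0) - W_x(\ell_0)$.

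Injectivity of $\A$ is a byproduct of the uniqueness part of Lax--Milgram (with $\z^* = 0$ the coercive a priori estimate forces $(v,w) = 0$), so $0 \in \rho(\A)$ and $\A^{-1} \in L(\H)$ by the closed graph theorem. Compactness of $\A^{-1}$ is then immediate: combining the Lax--Milgram estimate with $\|v_{xxxx}\|_{L^2} = \|V^*\|_{L^2}$ and $\|w_{xxxx}\|_{L^2} \leq \|w^*_{xx}\|_{L^2} + \|W^*\|_{L^2}$ shows that $\A^{-1}$ maps $\H$ boundedly into $H^4(0,\ell_0) \times H^2_l(0,\ell_0) \times H^4(\ell_0,\ell) \times H^2_r(\ell_0,\ell)$, which embeds compactly into $\H$ by Rellich--Kondrachov on each interval factor. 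The main technical subtlety, in my view, is the bookkeeping in the variational setup: one must identify the \emph{correct} antilinear functional so that all four transmission conditions in $\D(\A)$, two of which are baked into $\H_0$ and two of which must emerge as natural Euler--Lagrange conditions, are encoded properly.
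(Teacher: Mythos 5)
Your argument is correct, but it follows a genuinely different route from the paper. The paper solves the reduced system $-v_{xxxx}=\hat V$, $-w_{xxxx}+W_{xx}=\hat W$ completely explicitly: the general solutions are written as a cubic polynomial plus an iterated-integral particular solution on each subinterval, the four transmission conditions at $\ell_0$ become a $4\times 4$ linear system whose matrix has determinant $12\ell^4\neq 0$ (this is where uniqueness and existence are settled), and the a priori bound $\|\z\|_{\H}\leq c\|\hat\z\|_{\H}$ is then obtained by testing the four component equations against $V,v,W,w$; compactness follows, as in your proof, from the compact embedding of $\D(\A)$ into $\H$. You instead set up a Lax--Milgram problem on the subspace $\H_0$ carrying the two essential interface conditions, with the remaining two conditions ($v_{xx}(\ell_0)=w_{xx}(\ell_0)$ and $v_{xxx}(\ell_0)=w_{xxx}(\ell_0)-W_x(\ell_0)$) recovered as natural conditions after elliptic regularity; your weak form and the cancellation of the interface term $w^*_x(\ell_0)\bar\psi(\ell_0)$ are right. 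This buys you the resolvent bound for free from coercivity (plus $\|V\|_{L^2}\le c\|\hat v_{xx}\|_{L^2}$, $\|W\|_{L^2}\le c\|\hat w_{xx}\|_{L^2}$ by Poincar\'e) and avoids any determinant computation, at the cost of the standard bookkeeping you mention; the paper's computation is more elementary and fully explicit. One small blemish: invoking the closed graph theorem to get $\A^{-1}\in L(\H)$ is both unnecessary and slightly circular, since at that stage you have not shown $\A$ is closed (the paper deduces closedness \emph{from} the boundedness of $\A^{-1}$); simply quote the Lax--Milgram estimate, which already gives $\|\z\|_{\H}\leq c\|\hat\z\|_{\H}$ directly.
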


\begin{proof}
First, we show that $\A$ is bijective. To this end, 
for every $\hat \z = (\hat v, \hat V, \hat w, \hat W)^{\rm T}\in\H$, 
we prove that the equation $\A\z=\hat \z$ 
has a unique solution $\z=(v,V,w,W)^{\rm T}\in D(\A)$. 
Writing the equation in components, we obtain the system
\begin{align}
\label{E1s}
V = \hat v &\qquad\,\, {\rm in}\,\,\,H^2_l(0,\ell_0),\\
\label{E2s}
-v_{xxxx} = \hat V &\qquad\,\, {\rm in}\,\,\,L^2(0,\ell_0),\\\noalign{\vskip0.5mm}
\label{E3s}
W = \hat w &\qquad\,\, {\rm in}\,\,\,H^2_r(\ell_0,\ell),\\
\label{E4s}
- w_{xxxx} + W_{xx} = \hat W &\qquad\,\, {\rm in}\,\,\,L^2(\ell_0,\ell).
\end{align} 
From \eqref{E1s} and \eqref{E3s} we immediately learn that
$$
V = \hat v \in H^2_l(0,\ell_0) \and W = \hat w \in H^2_r(\ell_0,\ell).
$$
Since $\hat v(\ell_0)=\hat w(\ell_0)$ and $\hat v_x(\ell_0)=\hat w_x(\ell_0)$, we also have
the equalities
$V(\ell_0)=W(\ell_0)$ and $V_x(\ell_0)=W_x(\ell_0)$. 
For $x \in (0,\ell_0)$, the general solution of 
\eqref{E2s} with the boundary conditions $v(0)=v_x(0)=0$ is given by
$$
v(x)= a_1 x^3 + a_2 x^2 + p(x),
$$
where $a_1,a_2\in\C$ and
$$
p(x) = - \int_0^x \int_0^y \int_0^r \int_0^s \hat V(\tau)\, d\tau ds dr dy.
$$ 
Similarly, for $x \in (\ell_0,\ell)$, the general solution of \eqref{E4s} with $W = \hat w$ and
the boundary conditions $w(\ell)=w_x(\ell)=0$ is given by
$$
w(x)= a_3 (x-\ell)^3 + a_4 (x-\ell)^2 + q(x),
$$ 
where $a_3,a_4\in\C$ and
$$
q(x) = \int_\ell^x \int_\ell^y \int_\ell^r \int_\ell^s 
[\hat w_{xx}(\tau) - \hat W(\tau)]\, d\tau ds dr dy.
$$
Note that $v \in H^4(0,\ell_0)\cap H^2_l(0,\ell_0)$ 
and $w \in H^4(\ell_0,\ell)\cap H^2_r(\ell_0,\ell)$. 
In addition, there exists exactly
one choice of the constants $a_1,a_2,a_3,a_4$ such that 
\begin{equation}
\label{sysSSS}
\begin{cases}
v(\ell_0)=w(\ell_0),\\
v_{x}(\ell_0)=w_{x}(\ell_0),\\
v_{xx}(\ell_0)=w_{xx}(\ell_0),\\
v_{xxx}(\ell_0) = w_{xxx}(\ell_0) - W_{x}(\ell_0).
\end{cases}
\end{equation}
Indeed, introducing the matrix
$$
{\bf M}=
\begin{pmatrix}
\ell_0^3 & \ell_0^2 & -(\ell_0-\ell)^3 & -(\ell_0-\ell)^2\\[0.2em]
3 \ell_0^2 & 2\ell_0 & -3(\ell_0-\ell)^2 & - 2(\ell_0-\ell)\\[0.2em]
6\ell_0 & 2 & - 6(\ell_0-\ell) & -2\\[0.2em]
6 & 0 & -6 & 0
\end{pmatrix}
$$
and the vectors
\begin{align*}
&\boldsymbol{a}= (a_1, a_2, a_3,a_4)^{\rm T},\\
&\boldsymbol{b}= (q(\ell_0)-p(\ell_0), q_x(\ell_0)-p_x(\ell_0),
q_{xx}(\ell_0)-p_{xx}(\ell_0),q_{xxx}(\ell_0)-p_{xxx}(\ell_0) - \hat w_x(\ell_0))^{\rm T},
\end{align*}
we rewrite \eqref{sysSSS} as
${\bf M} \hspace{0.1mm}\boldsymbol{a}=\boldsymbol{b}$.
Direct calculations show that ${\rm Det}({\bf M})= 12 \ell^4 \neq 0$, meaning that
there is exactly one solution $\boldsymbol{a}\in \C^4$.
We conclude that $\z=(v,V,w,W)^{\rm T}\in D(\A)$ is the desired unique solution to the equation $\A\z=\hat \z$.

At this point, we multiply in $L^2(0,\ell_0)$ equation \eqref{E1s}
by $V$ and equation \eqref{E2s}
by $v$. Similarly, we multiply in $L^2(\ell_0,\ell)$ equation \eqref{E3s}
by $W$ and equation \eqref{E4s}
by $w$. Adding the resulting identities and after a straightforward calculation, we find
the bound
$$
\|\z\|_\H \leq c\|\hat \z\|_\H,
$$
for some structural constant $c>0$.
The latter ensures that the inverse operator $\A^{-1}$ is bounded, hence $\A$ is closed.
Finally, it is not difficult to check that $D(\A)$ is compactly embedded into $\H$, so that
the operator $\A^{-1}$ is compact.
\end{proof}

In the light of Lemma \ref{propA}, we infer that
$0$ belongs to the resolvent set $\rho(\A)$ of $\A$. Being $\rho(\A)$ an open set, 
this implies that the operator $\lambda-\A$ is surjective
for $\lambda>0$ small enough. In turn, 
$\A$ is densely defined \cite[Corollary II.3.20]{ENG} and, due to
the Lumer-Phillips theorem, it generates of a contraction $C_0$-semigroup 
$$
(S(t))_{t\geq0}:\mathcal{H}\to\mathcal{H}.
$$
We end the section by proving a spectral property of $\A$ that will be crucial
in the sequel.

\begin{lemma} 
\label{operA}
The spectrum $\sigma(\A)$ of $\A$ is contained in the open left half-plane $\C^{-}$. In particular,
the imaginary axis $i\R$ is spectrum free.
\end{lemma}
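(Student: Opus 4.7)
The plan is to combine the compactness of $\A^{-1}$ given by Lemma~\ref{propA} with the dissipation identity \eqref{DISSIP} and a short ODE unique-continuation argument across the interface $x=\ell_0$. By Lemma~\ref{propA}, the inverse $\A^{-1}$ is compact, so $\sigma(\A)$ consists entirely of eigenvalues of finite multiplicity and $0\in\rho(\A)$. The dissipativity \eqref{DISSIP} also gives $\sigma(\A)\subset\overline{\C^{-}}$. Hence the task reduces to ruling out purely imaginary nonzero eigenvalues.

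Arguing by contradiction, I would suppose that $\A\z=i\beta\z$ for some nonzero $\z=(v,V,w,W)^{\rm T}\in D(\A)$ with $\beta\in\R\setminus\{0\}$. Taking the real part of the identity $\l\A\z,\z\r_{\H}=i\beta\|\z\|_{\H}^{2}$ and invoking \eqref{DISSIP} yields $\|W_{x}\|_{L^{2}(\ell_{0},\ell)}=0$. Thus $W$ is constant on $(\ell_{0},\ell)$, and the boundary condition $W(\ell)=0$ forces $W\equiv 0$. Since the third scalar equation of $\A\z=i\beta\z$ reads $W=i\beta w$, it follows that $w\equiv 0$ on $(\ell_{0},\ell)$. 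In particular, $w(\ell_{0})$, $w_{x}(\ell_{0})$, $w_{xx}(\ell_{0})$, $w_{xxx}(\ell_{0})$, and $W_{x}(\ell_{0})$ all vanish.

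The four transmission conditions encoded in $D(\A)$ then yield $v(\ell_{0})=v_{x}(\ell_{0})=v_{xx}(\ell_{0})=v_{xxx}(\ell_{0})=0$. On the other hand, the first two components of the eigenvalue equation give $V=i\beta v$ together with $v_{xxxx}=\beta^{2}v$ on $(0,\ell_{0})$. This is a linear constant-coefficient fourth-order ODE with vanishing Cauchy data at $x=\ell_{0}$, hence $v\equiv 0$ on $(0,\ell_{0})$ by the Cauchy-Lipschitz theorem, and then $V\equiv 0$ as well. This forces $\z=0$, contradicting the eigenvector assumption.

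The only non-routine point is the transfer of the vanishing of $w$ on the damped subinterval $(\ell_{0},\ell)$ into full Cauchy data for $v$ at the interface. This hinges on the fourth transmission condition, which couples $v_{xxx}(\ell_{0})$ to both $w_{xxx}(\ell_{0})$ and $W_{x}(\ell_{0})$; fortunately, both quantities already vanish after the dissipation step, closing the argument. I expect this short interface bookkeeping to be the main, and essentially only, subtlety of the proof.
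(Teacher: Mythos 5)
Your proposal is correct and follows essentially the same route as the paper: dissipativity plus compactness of $\A^{-1}$ reduce the claim to excluding nonzero imaginary eigenvalues, the identity \eqref{DISSIP} kills $W$ (hence $w$) on the damped piece, and the transmission conditions hand full vanishing Cauchy data for $v$ at $x=\ell_0$. The only (cosmetic) difference is that you conclude $v\equiv 0$ by uniqueness for the Cauchy problem of $v_{xxxx}=\beta^2 v$ at $x=\ell_0$, whereas the paper writes the explicit general solution satisfying $v(0)=v_x(0)=0$ and checks that the four interface conditions force it to vanish; your shortcut is fine since $v\in H^4(0,\ell_0)$ makes the ODE argument rigorous.
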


\begin{proof}
Being $\A$ the infinitesimal generator 
of a contraction $C_0$-semigroup, it follows from the Hille-Yosida theorem that $\sigma(\A)$ is
contained in the closed left half-plane. Since 
we know from Lemma \ref{propA} that $\A^{-1}$ is a compact operator, 
$\sigma(\A)$ consists entirely of isolated eigenvalues (see e.g.~\cite[Theorem~6.29]{Kat}). 
We are left to show that there are no purely imaginary eigenvalues of $\A$. 
To this end, assume that $\z=(v,V,w,W)^{\rm T}\in D(\A)$ satisfies 
\begin{equation}
\label{SPEEQ}
i\lambda \z-\A\z=0
\end{equation}
for some $\lambda\in\R$ with $\lambda\neq0$ 
(recall that $0\notin\sigma(\A)$). Then, we get the system 
\begin{align}
\label{SPE1}
i\lambda v - V =0,\\
\label{SPE2}
i\lambda V + v_{xxxx} = 0,\\
\label{SPE3}
i\lambda w -W =0,\\
\label{SPE4}
i\lambda W + w_{xxxx} - W_{xx} = 0.
\end{align}
Taking the inner product in $\H$ of \eqref{SPEEQ} with $\z$
and invoking \eqref{DISSIP}, we see at once that $W_x=0$, yielding in turn $W=0$. Thus,
it follows from $\eqref{SPE3}$ that $w=0$ as well. Next, substituting \eqref{SPE1} into \eqref{SPE2},
we infer that $v$ fulfills 
$$
v_{xxxx}=\lambda^2 v.
$$
For $x \in (0,\ell_0)$, the general solution of 
the equation above with the boundary conditions $v(0)=v_x(0)=0$ is given by
$$
v(x)= 
a_1 {\rm e}^{i|\lambda|^{1/2} x}
+a_2 {\rm e}^{-i|\lambda|^{1/2} x}
+\alpha(a_1,a_2) {\rm e}^{|\lambda|^{1/2} x} + \beta(a_1,a_2) {\rm e}^{-|\lambda|^{1/2}x} 
$$
where $a_1,a_2\in\C$ and
$$
\alpha(a_1,a_2) = -\frac12 [a_1+a_2 + i (a_1-a_2)],\quad\,\,\,
\beta(a_1,a_2) = -\frac12 [a_1+a_2 - i (a_1-a_2)].
$$
Since $w(\ell_0)=w_{x}(\ell_0)=w_{xx}(\ell_0)=w_{xxx}(\ell_0)-W_x(\ell_0)=0$, 
and recalling that $\z\in D(\A)$,
the function $v$ is subjected to the further constraints 
$v(\ell_0)=v_x(\ell_0)=v_{xx}(\ell_0)=v_{xxx}(\ell_0)=0$. It is straightforward 
to check that these conditions
force $v=0$. Finally, equation
\eqref{SPE1} ensures that $V=0$.
\end{proof}

\section{The Gevrey Class}
\label{SEC4}

\noindent
In this section, we state the main result of the article concerning the Gevrey
class of the $C_0$-semigroup $(S(t))_{t\geq0}$ generated by the operator $\A$.
Let us recall the definition. 

\begin{definition}
Let $\delta>1$ and $t_0\geq0$ be given. A $C_0$-semigroup $(S(t))_{t\geq0}$  
is said to be of {\it Gevrey class $\delta$} for $t>t_0$ if
$(S(t))_{t\geq0}$ is infinitely differentiable for $t>t_0$ and
for all compact $K\subset (t_0,\infty)$ and all $\theta>0$ 
there is a constant $C = C(K,\theta)>0$ such that
$$
\|S^{(n)}(t)\|_{L(\H)}\leq C \theta^n (n!)^\delta
$$
for all $ t \in K$ and $n=0,1,2,\ldots$. Here,
$S^{(n)}(t)$ denotes the $n$-th derivative of $(S(t))_{t\geq0}$.
\end{definition}

\begin{remark}
Recall that if $(S(t))_{t\geq0}$ is infinitely differentiable for $t>t_0$ 
then $\|S^{(n)}(t)\|_{L(\H)}$ is continuous 
for all $t>t_0$ and $n=0,1,2,\ldots$ (see e.g.\ \cite[Lemma 4.2]{PAZ}).
\end{remark}

\begin{remark}
The regularity properties of Gevrey semigroups are somewhat ``between"
that of differentiable semigroups and analytic semigroups. Indeed,
the bounds on the derivatives of $(S(t))_{t\geq0}$ are 
stronger than the ones corresponding to differentiability
but weaker than the ones corresponding to
analyticity (see e.g.\ \cite{CT,PAZ,TAYLOR}).
\end{remark}

Our main result reads as follows.

\begin{theorem}
\label{MAINTH}
The $C_0$-semigroup $(S(t))_{t\geq0}$
is of Gevrey class $\delta>24$ for $t>0$. 
\end{theorem}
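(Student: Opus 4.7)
The strategy rests on the abstract characterization, due to Taylor~\cite{TAYLOR}, of Gevrey-class semigroups in terms of the decay rate of the resolvent along the imaginary axis. Combined with the spectral information of Lemma~\ref{operA}, this reduces Theorem~\ref{MAINTH} to the resolvent estimate
\begin{equation*}
\|(\i\lambda-\A)^{-1}\|_{L(\H)}\leq C|\lambda|^{-1/24}\qquad\text{for all }|\lambda|\geq \lambda_0,
\end{equation*}
for suitable constants $C,\lambda_0>0$. Since $\lambda\mapsto(\i\lambda-\A)^{-1}$ is well-defined and continuous on all of $\i\R$, only the behavior for large $|\lambda|$ is at issue.

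I would argue by contradiction. Suppose there existed sequences $\lambda_n\in\R$ with $|\lambda_n|\to\infty$ and $\z_n=(v_n,V_n,w_n,W_n)^{\rm T}\in D(\A)$ with $\|\z_n\|_\H=1$ such that $|\lambda_n|^{1/24}\|\hat\z_n\|_\H\to 0$, where $\hat\z_n:=\i\lambda_n\z_n-\A\z_n$; the aim is to show $\|\z_n\|_\H\to 0$, contradicting the normalization. Expanding $\hat\z_n=\i\lambda_n\z_n-\A\z_n$ componentwise gives
\begin{equation*}
\i\lambda_n v_n-V_n=\hat v_n,\quad \i\lambda_n V_n+v_{n,xxxx}=\hat V_n,\quad \i\lambda_n w_n-W_n=\hat w_n,\quad \i\lambda_n W_n+w_{n,xxxx}-W_{n,xx}=\hat W_n,
\end{equation*}
always subject to the transmission and boundary conditions encoded in $D(\A)$. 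The baseline estimate comes from the dissipation identity~\eqref{DISSIP}: pairing $\hat\z_n$ with $\z_n$ in $\H$ and taking real parts yields $\|W_{n,x}\|_{L^2(\ell_0,\ell)}^2\leq\|\hat\z_n\|_\H\|\z_n\|_\H\to 0$, and Poincar\'e (using $W_n(\ell)=0$) upgrades this to $\|W_n\|_{L^2(\ell_0,\ell)}\to 0$ at essentially the same rate.

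The heart of the argument is a bootstrap that promotes this weak first-order decay of $W_n$ into full decay of the $\H$-norm of $\z_n$. On the damped interval $(\ell_0,\ell)$ I would test the fourth equation against judicious multipliers --- such as $w_n$, $(x-\ell_0)\,\overline{w_{n,x}}$, and localized cut-offs of these --- in order to extract quantitative control of $\|w_{n,xx}\|_{L^2(\ell_0,\ell)}$. Lemmas~\ref{interpol} and~\ref{gagl} are to be invoked repeatedly in order to trade $L^2$-decay against uniformly bounded $H^m$-norms at intermediate orders; the exponent $1/24$ emerges from the combinatorics by which each such interpolation absorbs a controlled factor of $|\lambda_n|^{\theta}$, the chain closing only once the cumulative power reaches $23/24$. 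Once decay is in hand on the damped side, the transmission conditions at $x=\ell_0$ translate it into inhomogeneous boundary data, with explicit rates, for the reduced equation $v_{n,xxxx}-\lambda_n^2 v_n=\hat V_n+\i\lambda_n\hat v_n$ on $(0,\ell_0)$. A classical multiplier argument (for instance testing against $x\,\overline{v_{n,x}}$) combined with the clamped boundary at $x=0$ then delivers $\|v_{n,xx}\|_{L^2(0,\ell_0)}\to 0$ and $\|V_n\|_{L^2(0,\ell_0)}\to 0$, closing the contradiction.

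The principal obstacle, as anticipated in the introduction, is precisely the quantitative $H^2(\ell_0,\ell)$-estimate for $w_n$. The structural damping directly dissipates only $\|W_x\|_{L^2}$, whereas in the Kelvin-Voigt setting of~\cite{SOZZO} the analogous step yields $\|w_{xx}\|_{L^2}$ essentially for free. Reconstructing this missing regularity through multipliers and interpolation is the technical engine of the proof, and is exactly what inflates the Kelvin-Voigt exponent $8$ into $24$; the delicate part will be the bookkeeping ensuring that no intermediate estimate accumulates a factor worse than $|\lambda_n|^{23/24}$.
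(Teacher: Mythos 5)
Your reduction to the resolvent bound via Taylor's criterion, and your overall architecture (dissipation identity, then the damped interval, then transmission conditions feeding the undamped interval), match the paper; the contradiction framing is an acceptable equivalent of the direct estimate $\|\z\|_\H\leq c|\lambda|^{-1/24}\|\hat\z\|_\H$. However, the plan leaves unexecuted, and in one place misjudges, exactly the steps that carry the proof. On the damped side no multipliers are needed: from $\i\lambda w=W+\hat w$ one gets $\|w_x\|_{L^2(\ell_0,\ell)}\leq c|\lambda|^{-1}(\|\z\|_\H+\|\hat\z\|_\H)$, from the equation $\|w_{xxxx}\|_{L^2(\ell_0,\ell)}\leq c|\lambda|(\|\z\|_\H+\|\hat\z\|_\H)$, and interpolation yields the $H^2$ bound of Lemma~\ref{wW}. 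The genuinely delicate point there is the decay of $\|W\|_{L^2(\ell_0,\ell)}$: your Poincar\'e step only gives $\|W\|^2\leq c\|\z\|_\H\|\hat\z\|_\H$, with no gain in $|\lambda|$, and that is not enough downstream (for instance the term $|\lambda|\,|V(\ell_0)|\,|v_{xx}(\ell_0)|$ in Lemma~\ref{POINT2} would then carry a full factor $|\lambda|$ and the final absorption would fail). The paper gains the factor $|\lambda|^{-2/3}$ for $\|W\|^2$ through the auxiliary function $\omega=W_x-w_{xxx}+\int_{\ell_0}^x\hat W$, whose derivative equals $\i\lambda W$ by \eqref{R4s}, combined with interpolation; nothing in your outline supplies this refinement.

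On the undamped interval, the clamped conditions do not close the multiplier identity as you assume. Testing \eqref{R2s} with $(x-\ell_0)v_x$ leaves the boundary term $\tfrac{\ell_0}{2}|v_{xx}(0)|^2$ on the right-hand side ($v(0)=v_x(0)=0$ kills other traces but not this one), and this trace is not controlled by anything obtained up to that point. Estimating it is the technical core of the paper: Lemma~\ref{POINT2similar} uses an explicit ODE representation of $\varphi_\lambda=v_{xx}-|\lambda|v$ and $\psi_\lambda=v_x+|\lambda|^{1/2}v$ (following \cite{GRL}) to prove $|v_{xx}(0)|\leq c|\lambda|^{-1/2}|v_{xxx}(0)|+c|\lambda|^{-3/4}(\|\z\|_\H+\|\hat\z\|_\H)$, and Lemma~\ref{POINT2} bounds $|v_{xxx}(0)|^2\leq c|\lambda|^{5/6}(\|\z\|_\H^2+\|\hat\z\|_\H^2)$ by testing with $v_{xxx}$ and transferring, via the transmission conditions, the trace estimates of Lemma~\ref{POINT1} from the damped side. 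Your sketch treats these traces as coming for free from the clamped boundary, so the chain never closes as written. Finally, the stated bookkeeping target (a ``cumulative power $23/24$'') is not how the exponent arises: the estimates combine into $\|\z\|_\H^2\leq c|\lambda|^{-1/12}(\|\z\|_\H^2+\|\hat\z\|_\H^2)$, and $1/24$ appears simply as half of $1/12$.
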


In particular, $(S(t))_{t\geq0}$ turns out to be immediately differentiable.
As a consequence,
the spectrum determined growth condition is satisfied (see e.g.\ \cite[p.\,281]{ENG}), namely,
the spectral bound of $\A$
$$\sigma_*=\sup \{ \Re (\zeta) : \zeta \in \sigma(\A) \}$$
coincides with the growth bound of $(S(t))_{t\geq0}$
$$
\omega_*= \inf\{\omega\in\R:\,\,\|S(t)\|_{L(\H)}\leq
Me^{\omega t}\,\text{ for some }M= M(\omega)\geq 1\}.
$$
Moreover, being $(S(t))_{t\geq0}$ immediately 
differentiable (hence eventually norm-continuos), it follows from the inclusion 
$\sigma(\A)\subset\C^{-}$ ensured by Lemma~\ref{operA} that $(S(t))_{t\geq0}$
is exponentially stable, i.e.\ $\omega_*<0$ (see e.g.\ the proof of \cite[Prop.\,2.7]{AB}).
We summarize this discussion in the next result.

\begin{corollary}
The $C_0$-semigroup $(S(t))_{t\geq0}$ is immediately differentiable
and the spectrum determined 
growth condition is satisfied.
Moreover, $(S(t))_{t\geq0}$ is exponentially stable. 
\end{corollary}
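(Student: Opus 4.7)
The plan is to derive the three assertions as direct consequences of Theorem \ref{MAINTH} together with Lemmas \ref{propA} and \ref{operA} and standard semigroup theory, with essentially no new computation.

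First, immediate differentiability is built into the conclusion of Theorem \ref{MAINTH}: being of Gevrey class $\delta>24$ for $t>0$ means, by definition, that $(S(t))_{t\geq 0}$ is infinitely differentiable for $t>0$, and in particular immediately differentiable. As a byproduct, $(S(t))_{t\geq 0}$ is immediately norm-continuous: writing $S(t)-S(s)=\int_s^t S'(r)\,dr$ for $0<s<t$ and invoking the continuity of $t\mapsto\|S'(t)\|_{L(\H)}$ noted in the remark above, $t\mapsto S(t)$ turns out to be continuous in the operator norm on $(0,\infty)$.

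Second, for the spectrum determined growth condition I would invoke the spectral mapping theorem for eventually norm-continuous $C_0$-semigroups, namely $\sigma(S(t))\setminus\{0\}=\e^{t\sigma(\A)}$ for every $t>0$ (see e.g.\ \cite[Chap.\,IV]{ENG}). Taking spectral radii on both sides yields at once the identity $\omega_*=\sigma_*$.

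Finally, for exponential stability one has to strengthen Lemma \ref{operA} from the mere inclusion $\sigma(\A)\subset\C^{-}$ to the strict inequality $\sigma_*<0$. Here I would combine two ingredients. By Lemma \ref{propA} the operator $\A$ has compact resolvent, so $\sigma(\A)$ consists of isolated eigenvalues of finite algebraic multiplicity without finite accumulation point. By eventual norm continuity, the set $\sigma(\A)\cap\{\Re\lambda\geq-\eps\}$ is bounded for every $\eps>0$ (a standard consequence of the spectral mapping theorem, since $\sigma(S(t))$ is compact and $\{\lambda\in\C:\Re\lambda\geq-\eps,\ |\Im\lambda|\geq R\}$ maps into the unit disk only if $R$ is bounded). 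Combining these two facts forces that set to be finite, and since Lemma \ref{operA} places every one of its elements in $\C^{-}$, the supremum of their real parts is strictly negative, yielding $\sigma_*<0$ and therefore $\omega_*<0$ by the previous step. The one mildly delicate point is precisely this last one: the containment $\sigma(\A)\subset\C^{-}$ alone would not exclude accumulation of eigenvalues at the imaginary axis, and it is the interplay between the compact resolvent and the Gevrey (hence norm-continuous) regularity provided by Theorem \ref{MAINTH} that closes this gap.
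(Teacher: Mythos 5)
Your overall route coincides with the paper's: immediate differentiability is read off from the Gevrey class of Theorem \ref{MAINTH}, the spectrum determined growth condition follows from eventual norm continuity via the spectral mapping theorem, and exponential stability is obtained by upgrading the inclusion $\sigma(\A)\subset\C^-$ of Lemma \ref{operA} to the strict bound $\sigma_*<0$; the paper compresses exactly this into citations of \cite[p.\,281]{ENG} and of the proof of \cite[Prop.\,2.7]{AB}. The one step where your argument as written breaks down is the parenthetical justification of the claim that $\sigma(\A)\cap\{\Re\lambda\geq-\eps\}$ is bounded. This does \emph{not} follow from the spectral mapping theorem together with compactness of $\sigma(S(t))$: the map $\lambda\mapsto \e^{t\lambda}$ is $2\pi i/t$-periodic in the imaginary direction, so a sequence of eigenvalues with real parts in $[-\eps,0)$ and imaginary parts tending to $\pm\infty$ would be sent into the compact annulus $\{z:\e^{-\eps t}\leq|z|\leq1\}$, producing no contradiction with the compactness of $\sigma(S(t))$. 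The boundedness of the spectrum of the generator in right half-planes is a genuine theorem for eventually norm-continuous semigroups (see e.g.\ \cite[Theorem II.4.18]{ENG}), but its proof rests on the decay of the resolvent along vertical lines (a Riemann--Lebesgue type argument), not on the spectral mapping theorem; you should invoke it as such rather than derive it the way you do.

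Once that claim is correctly invoked, the rest of your argument goes through, and in fact the compact resolvent is not needed: $\sigma(\A)\cap\{\Re\lambda\geq-\eps\}$ is then compact, so if one had $\sigma_*=0$ a sequence of spectral points with real parts tending to $0$ would accumulate at a point of $\sigma(\A)\cap i\R$, contradicting Lemma \ref{operA}. Alternatively, in this specific paper the needed quantitative information is available for free: the proof of Theorem \ref{MAINTH} shows that $\|(i\lambda-\A)^{-1}\|_{L(\H)}$ is bounded (indeed decays) along $i\R$, so either a Neumann series argument keeps a strip around the imaginary axis free of spectrum for large $|\lambda|$, or one concludes exponential stability directly from the Gearhart--Pr\"uss theorem, bypassing the spectral discussion altogether.
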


\section{Proof of Theorem \ref{MAINTH}}
\label{SEC5}

\noindent
The argument is based on the following well-known abstract criterion, 
whose proof can be found in \cite[Ch.\,5]{TAYLOR}.

\begin{lemma}[Taylor]
\label{absgev}
Assume that $\sigma(\A) \cap i \R = \emptyset$. If there exists $\mu\in(0,1)$ such that
\begin{equation}
\label{limsup}
\limsup_{|\lambda|\to\infty} |\lambda|^{\mu}\|(i\lambda - \A)^{-1}\|_{L(\H)}<\infty,
\end{equation}
then $(S(t))_{t\geq0}$ is of Gevrey class $\delta> 1/\mu$ for $t>0$.
\end{lemma}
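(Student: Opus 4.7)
The plan is to use the resolvent hypothesis to enlarge $\rho(\A)$ to a parabolic-type region $\Omega$ lying strictly to the left of $i\R$, then represent $S^{(n)}(t)$ for $t>0$ as an absolutely convergent contour integral along $\partial\Omega$, and finally estimate this integral by means of Stirling's formula.

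First I would extend the resolvent. From \eqref{limsup}, there exist $M\geq 1$ and $R>0$ such that
$$
\|(i\lambda-\A)^{-1}\|_{L(\H)}\leq M|\lambda|^{-\mu}\qquad\forall\,|\lambda|\geq R.
$$
The Neumann series expansion of the resolvent identity
$$
(\zeta-\A)^{-1}=(i\lambda-\A)^{-1}\sum_{k=0}^{\infty}\bigl[(i\lambda-\zeta)(i\lambda-\A)^{-1}\bigr]^{k}
$$
converges whenever $|\zeta-i\lambda|\leq|\lambda|^{\mu}/(2M)$, giving $\|(\zeta-\A)^{-1}\|_{L(\H)}\leq 2M|\lambda|^{-\mu}$ throughout that disk. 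Since $\sigma(\A)\cap i\R=\emptyset$ makes the resolvent uniformly bounded on the compact set $\{i\lambda:|\lambda|\leq R\}$, I would deduce that for a sufficiently small $\kappa>0$ the region
$$
\Omega:=\bigl\{\zeta\in\C:\Re\zeta\geq-\kappa(1+|\Im\zeta|^{\mu})\bigr\}
$$
is contained in $\rho(\A)$, and $\|(\zeta-\A)^{-1}\|_{L(\H)}\leq K(1+|\Im\zeta|)^{-\mu}$ throughout $\Omega$.

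Next I would set up the contour representation. Let $\Gamma:=\partial\Omega$, oriented so that $\Im\zeta$ increases, parametrized by $\zeta(s)=-\kappa(1+|s|^{\mu})+is$ for $s\in\R$. For $t>0$ and any integer $n\geq 0$, the operator-valued integral
$$
T_{n}(t):=\frac{1}{2\pi i}\int_{\Gamma}\zeta^{n}e^{\zeta t}(\zeta-\A)^{-1}\,d\zeta
$$
converges absolutely in $L(\H)$, since on $\Gamma$ its integrand is dominated by a constant times $(1+|s|)^{n-\mu}e^{-\kappa t(1+|s|^{\mu})}$. Starting from the Bromwich inversion formula on a vertical line $\{\gamma+i\R\}$ with $\gamma>0$, Cauchy's theorem lets me deform the contour onto $\Gamma$: for $\z\in\D(\A^{2})$ the identity
$$
(\zeta-\A)^{-1}\z=\zeta^{-1}\z+\zeta^{-2}\A\z+\zeta^{-2}(\zeta-\A)^{-1}\A^{2}\z
$$
provides an extra factor $|\zeta|^{-1}$ in the integrand, so the horizontal connecting arcs at $\pm iN$ (of length $\mathcal O(N^{\mu})$) contribute $\mathcal O(N^{\mu-1})\to 0$, using $\mu<1$. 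Hence $T_{0}(t)=S(t)$ on $\D(\A^{2})$, and by boundedness on all of $\H$. Differentiation under the integral sign, legitimate because each $T_{n}$ converges absolutely in operator norm, then yields $S^{(n)}(t)=T_{n}(t)$ for every $n\geq 0$ and every $t>0$.

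Finally I would deduce the Gevrey bound. Direct estimation on $\Gamma$ together with the substitution $u=\kappa t|s|^{\mu}$ yields, for every compact $K\subset(0,\infty)$ and every $t\in K$,
$$
\|S^{(n)}(t)\|_{L(\H)}\leq C_{K}\,\frac{\Gamma\bigl((n+1)/\mu\bigr)}{(\kappa t)^{(n+1)/\mu}}.
$$
Stirling's formula furnishes constants $B>0$ and $C_{0}>0$, independent of $n$, such that $\Gamma((n+1)/\mu)\leq C_{0}B^{n}(n!)^{1/\mu}$, whence
$$
\|S^{(n)}(t)\|_{L(\H)}\leq\tilde{C}_{K}B^{n}(n!)^{1/\mu},\qquad t\in K,\ n\geq 0.
$$
For any $\delta>1/\mu$ and any $\theta>0$, the quantity $(B/\theta)^{n}(n!)^{-(\delta-1/\mu)}$ tends to zero super-exponentially in $n$, hence is bounded by some $C(\theta)>0$; this converts the previous estimate into $\|S^{(n)}(t)\|_{L(\H)}\leq C(K,\theta)\theta^{n}(n!)^{\delta}$, exactly the Gevrey-$\delta$ bound. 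The main technical obstacle I anticipate is the rigorous justification of the contour deformation from the Bromwich line to $\Gamma$: it requires both the extra decay $\mathcal O(|\zeta|^{-1})$ obtained by working initially on $\D(\A^{2})$ (crucially exploiting $\mu<1$) and a density argument to upgrade the identification $T_{n}(t)=S^{(n)}(t)$ from the strong sense to operator norm.
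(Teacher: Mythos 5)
The paper does not prove this lemma at all: it is quoted as a known abstract criterion with a pointer to Taylor's thesis. Your argument is precisely the standard (and, as far as the cited source goes, the original) proof --- extend the resolvent across $i\R$ by a Neumann series into a region $\Re\zeta\geq-\kappa(1+|\Im\zeta|^{\mu})$, deform the Bromwich contour onto its boundary using the $\D(\A^{2})$ decomposition to kill the connecting arcs (this is where $\mu<1$ enters), differentiate under the integral, and convert $\Gamma((n+1)/\mu)$ into $(n!)^{1/\mu}$ via Stirling --- and it is sound; the only cosmetic imprecision is that the uniform bound $\|(\zeta-\A)^{-1}\|_{L(\H)}\leq K(1+|\Im\zeta|)^{-\mu}$ need only be verified on the portion of $\Omega$ swept by the deformation (where it does hold, combining the Neumann disks near $i\R$ with the contraction bound $1/\Re\zeta$ further right), not on all of the right half-plane.
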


Since Lemma \ref{operA} ensures that $\sigma(\A) \cap \i \R = \emptyset$,
we only need to show the validity of condition \eqref{limsup} with $\mu=1/24$. To this end,
for every $\lambda \in \R$ and
$\hat \z=(\hat v,\hat V,\hat w,\hat W)^{\rm T}\in \H$,
we consider the resolvent equation
$$
i\lambda \z - \A \z = \hat \z,
$$
and we denote by $\z = (v,V,w,W)^{\rm T}\in \D(\A)$ its unique solution. 
Taking the inner product in $\H$ with $\z$
and exploiting \eqref{DISSIP}, we immediately get 
$$
\Re \l i\lambda \z - \A \z, \z \r_\H = \|W_{x}\|_{L^2(\ell_0,\ell)}^2=
\Re \l \hat \z, \z \r_\H.
$$
The latter yields
\begin{equation}
\label{DISS}
\|W_{x}\|_{L^2(\ell_0,\ell)}^2 \leq \|\z\|_{\H}\|\hat \z \|_{\H}.
\end{equation}
Next, writing  the resolvent equation componentwise, we obtain the system
\begin{align}
\label{R1s}
i\lambda v - V = \hat v &\qquad\,\, {\rm in}\,\,\,H^2_l(0,\ell_0),\\
\label{R2s}
i\lambda V + v_{xxxx} = \hat V &\qquad\,\, {\rm in}\,\,\,L^2(0,\ell_0),\\\noalign{\vskip0.5mm}
\label{R3s}
i\lambda w - W = \hat w &\qquad\,\, {\rm in}\,\,\,H^2_r(\ell_0,\ell),\\
\label{R4s}
i\lambda W + w_{xxxx} - W_{xx} = \hat W &\qquad\,\, {\rm in}\,\,\,L^2(\ell_0,\ell).
\end{align}
In the sequel, we always denote by $c>0$ a generic constant
depending only on the structural quantities
of the problem (hence independent of $\lambda$),
whose value might change from line to line, or even within the same line.

\begin{lemma}
\label{wW}
For every $|\lambda|\geq1$ the inequality
$$
\|w_{xx}\|_{L^2(\ell_0,\ell)}^2 + \|W\|_{L^2(\ell_0,\ell)}^2 
\leq \frac{c}{|\lambda|^{2/3}}\big[\|\z\|_{\H}^2 + \|\hat \z \|_{\H}^2\big]
$$
holds for some structural constant $c>0$ independent of $\lambda$.
\end{lemma}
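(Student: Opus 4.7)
The plan is to prove the two bounds by combining the dissipation estimate \eqref{DISS}, the explicit structure of the scalar resolvent equations \eqref{R3s}--\eqref{R4s}, and the Sobolev-scale interpolation of Lemma~\ref{interpol}. The main idea is to squeeze $\|w\|_{H^2(\ell_0,\ell)}$ between the small $H^1$-bound furnished by the damping and the large $H^4$-bound furnished by the equation itself.

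First I would differentiate \eqref{R3s} in $x$, obtaining $W_x = i\lambda w_x - \hat w_x$, and combine this with \eqref{DISS} and Poincar\'e (using $w(\ell)=w_x(\ell)=0$ and the analogous zero traces of $\hat w$) to deduce
$$
\|w\|_{H^1(\ell_0,\ell)}^2 \leq \frac{c}{|\lambda|^2}\bigl(\|\z\|_\H^2+\|\hat \z\|_\H^2\bigr).
$$
Next I would substitute $W = i\lambda w - \hat w$ and $W_{xx} = i\lambda w_{xx} - \hat w_{xx}$ into \eqref{R4s} to obtain the fourth-order ODE
$$
w_{xxxx} = \lambda^2 w + i\lambda w_{xx} + i\lambda\hat w - \hat w_{xx} + \hat W,
$$
from which a direct $L^2$-estimate (dominated by the $\lambda^2 w$ term and exploiting the smallness of $\|w\|_{L^2}$ just obtained) gives $\|w_{xxxx}\|_{L^2}\leq c|\lambda|(\|\z\|_\H^2+\|\hat \z\|_\H^2)^{1/2}$. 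Closing a short quadratic inequality via the intermediate interpolation $\|w\|_{H^3}^2\leq c\|w\|_{H^2}\|w\|_{H^4}$ (Lemma~\ref{interpol} with $j=2,k=3,m=4$) then upgrades this to $\|w\|_{H^4(\ell_0,\ell)} \leq c|\lambda|(\|\z\|_\H^2+\|\hat \z\|_\H^2)^{1/2}$, and a final application of Lemma~\ref{interpol} with $j=1$, $k=2$, $m=4$ (so $\theta=1/3$) yields
$$
\|w\|_{H^2(\ell_0,\ell)} \leq c\|w\|_{H^1}^{2/3}\|w\|_{H^4}^{1/3} \leq \frac{c}{|\lambda|^{1/3}}\bigl(\|\z\|_\H^2+\|\hat \z\|_\H^2\bigr)^{1/2},
$$
which is the desired bound on $\|w_{xx}\|_{L^2}^2$.

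For $\|W\|_{L^2}^2$, I would use the algebraic identity $i\lambda(W - w_{xx}) = \hat W - \hat w_{xx} - w_{xxxx}$ (obtained from \eqref{R4s} after substituting $W_{xx} = i\lambda w_{xx} - \hat w_{xx}$) together with an imaginary-part multiplier identity extracted by testing \eqref{R4s} against $\bar W$. The latter yields a relation of the form $\lambda(\|W\|^2 - \|w_{xx}\|^2) = (\text{interface traces at } \ell_0) + (\text{bounded remainder})$, from which the trace contributions at $x=\ell_0$ can be controlled by Gagliardo--Nirenberg (Lemma~\ref{gagl}) together with the $H^4$-bound on $w$ just established, thereby transferring the $|\lambda|^{-2/3}$ decay from $\|w_{xx}\|^2$ to $\|W\|^2$.

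\textbf{Main obstacle.} The delicate step is the $\|W\|_{L^2}^2$ estimate: a naive Poincar\'e bound from the dissipation estimate \eqref{DISS} yields only $\|W\|^2\lesssim\|\z\|_\H\|\hat \z\|_\H$, which does not readily split into $c|\lambda|^{-2/3}(\|\z\|_\H^2+\|\hat \z\|_\H^2)$. Extracting the missing $|\lambda|^{-2/3}$ factor hinges on a careful bookkeeping of the interface traces $w_{xxx}(\ell_0)\bar W(\ell_0)$ and $W_x(\ell_0)\bar W(\ell_0)$, whose sizes (of order $|\lambda|^{2/3}$ and $|\lambda|^{1/3}$ respectively, when estimated via Gagliardo--Nirenberg and the $\|w\|_{H^4}$ bound) must be balanced precisely against the $|\lambda|^{-1}$ prefactor coming from the $\lambda\|W\|^2$ term of the multiplier identity.
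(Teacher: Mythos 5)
Your estimate of $\|w_{xx}\|_{L^2(\ell_0,\ell)}^2$ is correct and is essentially the paper's own argument: the dissipation gives $\|w\|_{H^1(\ell_0,\ell)}\lesssim |\lambda|^{-1}$, the resolvent equation gives $\|w\|_{H^4(\ell_0,\ell)}\lesssim |\lambda|$, and Lemma~\ref{interpol} with $j=1$, $k=2$, $m=4$ yields the $|\lambda|^{-2/3}$ decay.

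The gap is in the second half, the bound on $\|W\|_{L^2(\ell_0,\ell)}^2$, and you have in fact flagged the problem without resolving it. Testing \eqref{R4s} against $\overline W$ and taking imaginary parts does give
$\lambda\big(\|W\|^2-\|w_{xx}\|^2\big)=\Im\big[w_{xxx}(\ell_0)\overline{W(\ell_0)}-w_{xx}(\ell_0)\overline{W_x(\ell_0)}-W_x(\ell_0)\overline{W(\ell_0)}\big]+O\big(\|\z\|_\H\|\hat\z\|_\H\big)$,
but at this stage of the proof the only available bounds are $\|w\|_{H^3}\lesssim|\lambda|^{1/3}$, $\|w\|_{H^4}\lesssim|\lambda|$ and $|W(\ell_0)|\lesssim\|W\|^{1/2}\|W\|_{H^1}^{1/2}\lesssim \|\z\|_\H+\|\hat\z\|_\H$ (the dissipation gives no smallness in $\lambda$ for $W$ itself). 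Gagliardo--Nirenberg then gives $|w_{xxx}(\ell_0)|\lesssim\|w\|_{H^3}^{1/2}\|w\|_{H^4}^{1/2}\lesssim|\lambda|^{2/3}$, so the dominant trace term is of size $|\lambda|^{2/3}\big[\|\z\|_\H^2+\|\hat\z\|_\H^2\big]$; dividing by the prefactor $|\lambda|$ yields only $\|W\|^2\lesssim|\lambda|^{-1/3}\big[\|\z\|_\H^2+\|\hat\z\|_\H^2\big]$, which falls short of the claimed $|\lambda|^{-2/3}$ by a factor $|\lambda|^{1/3}$ (your own bookkeeping, ``$|\lambda|^{2/3}$ against the $|\lambda|^{-1}$ prefactor,'' says exactly this). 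The sharper pointwise bounds $|W(\ell_0)|^2\lesssim|\lambda|^{-1/3}$ and $|w_{xxx}(\ell_0)|^2\lesssim|\lambda|^{5/6}$ of Lemma~\ref{POINT1} cannot be invoked here, since they are proved using the very lemma you are trying to establish; and a self-improving iteration of your scheme stalls strictly below the exponent $2/3$. The paper sidesteps the interface terms altogether: it introduces $\omega=W_x-w_{xxx}+\int_{\ell_0}^x\hat W$, observes from \eqref{R4s} that $\omega_x=i\lambda W$, and estimates $\|W\|^2=|\lambda|^{-2}\|\omega_x\|^2\leq c|\lambda|^{-2}\|\omega\|\,\|\omega\|_{H^2}$ by the (norm-based, hence boundary-free) interpolation Lemma~\ref{interpol}, using $\|\omega\|\lesssim|\lambda|^{1/3}$ (from \eqref{DISS} and the $H^3$ bound on $w$) and $\|\omega_{xx}\|=|\lambda|\|W_x\|$ controlled by the dissipation; this is what produces the $|\lambda|^{-2/3}$ rate. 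You would need to replace your trace-based step by an argument of this kind (or supply genuinely sharper, non-circular trace estimates) for the proof to close.
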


\begin{proof}
Exploiting \eqref{R3s} and \eqref{DISS}, it is readily seen that 
\begin{align}
\label{wxest}
\|w_x\|_{L^2(\ell_0,\ell)}\leq \frac{c}{|\lambda|}\big[\|W_{x}\|_{L^2(\ell_0,\ell)}+ 
\|\hat w_{x}\|_{L^2(\ell_0,\ell)}\big]
&\leq\frac{c}{|\lambda|}\big[ \|\z\|_{\H}^{1/2}\|\hat \z \|_{\H}^{1/2}
+\|\hat \z \|_{\H}\big]\\\nonumber
&\leq \frac{c}{|\lambda|}\big[ \|\z\|_{\H} + \|\hat \z \|_{\H}\big].
\end{align}
Moreover, it follows from \eqref{R4s} and \eqref{R3s} that for all $|\lambda|\geq1$
\begin{align}
\label{wxxxxest}
\|w_{xxxx}\|_{L^2(\ell_0,\ell)}&\leq c|\lambda|\|W\|_{L^2(\ell_0,\ell)}
+c\|W_{xx}\|_{L^2(\ell_0,\ell)}+
c\|\hat W\|_{L^2(\ell_0,\ell)}\\\noalign{\vskip0.7mm}\nonumber
&\leq c|\lambda|\|W\|_{L^2(\ell_0,\ell)}+ c|\lambda|\|w_{xx}\|_{L^2(\ell_0,\ell)}
+c \|\hat \z \|_{\H}\\\noalign{\vskip0.7mm}\nonumber
&\leq  c|\lambda|\big[\|\z\|_{\H} + \|\hat \z \|_{\H}\big].
\end{align}
Using \eqref{wxest}-\eqref{wxxxxest} and interpolating between $H^1(\ell_0,\ell)$ and
$H^4(\ell_0,\ell)$ (cf.\ Lemma \ref{interpol}), we find the bound
\begin{align*}
\|w_{xx}\|^2_{L^2(\ell_0,\ell)} &
\leq c \|w\|^{4/3}_{H^1(\ell_0,\ell)}\|w\|^{2/3}_{H^4(\ell_0,\ell)}\\\noalign{\vskip0.7mm}
&\leq c \|w_x\|^2_{L^2(\ell_0,\ell)} 
+ c \|w_x\|^{4/3}_{L^2(\ell_0,\ell)}\|w_{xxxx}\|^{2/3}_{L^2(\ell_0,\ell)}\\
&\leq \frac{c}{|\lambda|^{2/3}}\big[\|\z\|_{\H}^2 + \|\z\|_{\H}^{4/3}\|\hat \z\|_{\H}^{2/3}
+ \|\z\|_{\H}^{2/3}\|\hat \z \|_{\H}^{4/3} + \|\hat \z \|_{\H}^2\big]
\end{align*}
for all $|\lambda|\geq1$. Invoking the Young inequality \eqref{young}, 
we infer that 
$$
\|\z\|_{\H}^2 + \|\z\|_{\H}^{4/3}\|\hat \z\|_{\H}^{2/3}
+ \|\z\|_{\H}^{2/3}\|\hat \z \|_{\H}^{4/3} + \|\hat \z \|_{\H}^2 
\leq c \|\z\|_{\H}^2+\|\hat \z\|_{\H}^2,
$$
yielding the desired estimate for $\|w_{xx}\|_{L^2(\ell_0,\ell)}$. Next,
owing to \eqref{wxest}-\eqref{wxxxxest} and interpolating again between $H^1(\ell_0,\ell)$ and
$H^4(\ell_0,\ell)$, we obtain
\begin{align*}
\|w_{xxx}\|^2_{L^2(\ell_0,\ell)} &\leq c \|w\|^{2/3}_{H^1(\ell_0,\ell)}\|w\|^{4/3}_{H^4(\ell_0,\ell)}\\
&\leq c \|w_x\|^2_{L^2(\ell_0,\ell)} 
+ c \|w_x\|^{2/3}_{L^2(\ell_0,\ell)}\|w_{xxxx}\|^{4/3}_{L^2(\ell_0,\ell)}\\
&\leq c |\lambda|^{2/3}\big[\|\z\|_{\H}^2 + \|\z\|_{\H}^{4/3}\|\hat \z\|_{\H}^{2/3}
+ \|\z\|_{\H}^{2/3}\|\hat \z \|_{\H}^{4/3} + \|\hat \z \|_{\H}^2\big]\\\noalign{\vskip0.7mm}
&\leq c |\lambda|^{2/3}\big[\|\z\|_{\H}^2 + \|\hat \z \|_{\H}^2\big]
\end{align*}
for all $|\lambda|\geq1$. Defining the function
$$
\omega(x) =  W_x(x)- w_{xxx}(x) +\int_{\ell_0}^x \hat W (y) dy, 
$$
it follows from the estimate above and 
\eqref{DISS} that 
\begin{align*}
\|\omega\|_{L^2(\ell_0,\ell)}&\leq c\big[
\|W_{x}\|_{L^2(\ell_0,\ell)}+\|w_{xxx}\|_{L^2(\ell_0,\ell)}
 + \|\hat W\|_{L^2(\ell_0,\ell)} \big]\\
&\leq c |\lambda|^{1/3} \big[ \|\z\|_{\H} + \|\z\|_{\H}^{1/2}\|\hat \z \|_{\H}^{1/2}
+\|\hat \z \|_{\H}\big]\\\nonumber
&\leq c |\lambda|^{1/3} \big[ \|\z\|_{\H} + \|\hat \z \|_{\H}\big].
\end{align*}
Therefore, exploiting \eqref{R4s} and interpolating between $L^2(\ell_0,\ell)$ and
$H^2(\ell_0,\ell)$, we have
\begin{align*}
\|W\|_{L^2(\ell_0,\ell)}^2 = \frac{1}{|\lambda|^2}\|\omega_x\|_{L^2(\ell_0,\ell)}^2 &\leq 
\frac{c}{|\lambda|^2}\|\omega\|_{L^2(\ell_0,\ell)}\|\omega\|_{H^2(\ell_0,\ell)}\\
&\leq \frac{c}{|\lambda|^2}\|\omega\|_{L^2(\ell_0,\ell)}^2
+\frac{c}{|\lambda|^2}\|\omega\|_{L^2(\ell_0,\ell)}\|\omega_{xx}\|_{L^2(\ell_0,\ell)}\\
&=\frac{c}{|\lambda|^2}\|\omega\|_{L^2(\ell_0,\ell)}^2
+\frac{c}{|\lambda|}\|\omega\|_{L^2(\ell_0,\ell)}\|W_x\|_{L^2(\ell_0,\ell)}\\
&\leq\frac{c}{|\lambda|^{2/3}}\big[\|\z\|_{\H}^{2} +\|\z\|_{\H}^{3/2}\|\hat \z \|_{\H}^{1/2} + 
\|\z\|_{\H}^{1/2}\|\hat \z \|_{\H}^{3/2}+\|\hat \z\|_{\H}^{2}\big]
\end{align*}
for all $|\lambda|\geq1$. Due to the Young inequality \eqref{young}, it is immediate to check that
$$
\|\z\|_{\H}^{2} +\|\z\|_{\H}^{3/2}\|\hat \z \|_{\H}^{1/2} + 
\|\z\|_{\H}^{1/2}\|\hat \z \|_{\H}^{3/2}+\|\hat \z\|_{\H}^{2} 
\leq c \|\z\|_{\H}^2+\|\hat \z\|_{\H}^2,
$$
and the proof is finished.
\end{proof}

\begin{lemma}
\label{POINT1}
For every $|\lambda|\geq1$ the inequalities
\begin{align}
\label{P1}
|W(\ell_0)|^2 &\leq \frac{c}{|\lambda|^{1/3}}\big[\|\z\|_{\H}^2 + \|\hat \z \|_{\H}^2\big]\\\noalign{\vskip1mm}
\label{P2}
|W_x(\ell_0)|^2  &\leq c|\lambda|^{2/3}\big[\|\z\|_{\H}^2 + \|\hat \z \|_{\H}^2\big]\\\noalign{\vskip1.5mm}
\label{P3}
|w_{xx}(\ell_0)|^2 &\leq \frac{c}{|\lambda|^{1/6}}\big[\|\z\|_{\H}^2 + \|\hat \z \|_{\H}^2\big]\\\noalign{\vskip1mm}
\label{P4}
|w_{xxx}(\ell_0)|^2  &\leq c|\lambda|^{5/6}\big[\|\z\|_{\H}^2 + \|\hat \z \|_{\H}^2\big]
\end{align}
hold for some structural constant $c>0$ independent of $\lambda$.
\end{lemma}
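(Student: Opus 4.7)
The plan is to first sharpen the $L^2$-norm bounds on the higher derivatives of $w$ and $W$ by feeding Lemma \ref{wW} back into \eqref{R3s}--\eqref{R4s}, and then to apply the Gagliardo-Nirenberg inequality (Lemma \ref{gagl}) to derive each of the four pointwise estimates \eqref{P1}--\eqref{P4}.

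For the sharpening step, I would first differentiate the identity $W=i\lambda w-\hat w$ coming from \eqref{R3s} twice in $x$ to obtain $W_{xx}=i\lambda w_{xx}-\hat w_{xx}$. Coupled with the $|\lambda|^{-1/3}$-decay of $\|w_{xx}\|_{L^2(\ell_0,\ell)}$ provided by Lemma \ref{wW}, this yields
$$
\|W_{xx}\|_{L^2(\ell_0,\ell)}\leq c|\lambda|^{2/3}\big[\|\z\|_{\H}+\|\hat\z\|_{\H}\big].
$$
Plugging this together with the sharp bound on $\|W\|_{L^2(\ell_0,\ell)}$ from Lemma \ref{wW} into \eqref{R4s} then produces the refined estimate
$$
\|w_{xxxx}\|_{L^2(\ell_0,\ell)}\leq c|\lambda|^{2/3}\big[\|\z\|_{\H}+\|\hat\z\|_{\H}\big],
$$
strictly better than the $c|\lambda|$-bound used within the proof of Lemma \ref{wW}. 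Next, I would apply Lemma \ref{interpol} with $j=2$, $k=3$, $m=4$ (using Poincar\'e to dominate the lower-order pieces of $\|w\|_{H^2}$ by $\|w_{xx}\|_{L^2}$), getting $\|w_{xxx}\|_{L^2}^2\leq c\|w\|_{H^2}\|w\|_{H^4}$ and hence
$$
\|w_{xxx}\|_{L^2(\ell_0,\ell)}\leq c|\lambda|^{1/6}\big[\|\z\|_{\H}+\|\hat\z\|_{\H}\big].
$$

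With these refined estimates, each of \eqref{P1}--\eqref{P4} follows by a direct application of Lemma \ref{gagl} to $W$, $W_x$, $w_{xx}$, $w_{xxx}$ respectively. For instance, for \eqref{P3},
$$
|w_{xx}(\ell_0)|^2\leq c\|w_{xx}\|_{L^2(\ell_0,\ell)}\big(\|w_{xx}\|_{L^2(\ell_0,\ell)}+\|w_{xxx}\|_{L^2(\ell_0,\ell)}\big)\leq \frac{c}{|\lambda|^{1/6}}\big[\|\z\|_{\H}^2+\|\hat\z\|_{\H}^2\big]
$$
after Young's inequality \eqref{young}. The bound \eqref{P1} is analogous, using $\|W\|_{L^2}\leq c|\lambda|^{-1/3}[\|\z\|_{\H}+\|\hat\z\|_{\H}]$ and $\|W_x\|_{L^2}^2\leq\|\z\|_{\H}\|\hat\z\|_{\H}$ from \eqref{DISS}; \eqref{P2} uses the same $\|W_x\|_{L^2}$-bound paired with $\|W_{xx}\|_{L^2}\leq c|\lambda|^{2/3}[\|\z\|_{\H}+\|\hat\z\|_{\H}]$; and \eqref{P4} is obtained by combining $\|w_{xxx}\|_{L^2}\leq c|\lambda|^{1/6}[\ldots]$ with $\|w_{xxxx}\|_{L^2}\leq c|\lambda|^{2/3}[\ldots]$.

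The main obstacle is the bootstrap character of the first step. The intermediate bounds $\|w_{xxx}\|_{L^2}\leq c|\lambda|^{1/3}[\|\z\|_{\H}+\|\hat\z\|_{\H}]$ and $\|w_{xxxx}\|_{L^2}\leq c|\lambda|[\|\z\|_{\H}+\|\hat\z\|_{\H}]$ that were sufficient inside the proof of Lemma \ref{wW} would only deliver $|w_{xx}(\ell_0)|^2\leq c[\|\z\|_{\H}^2+\|\hat\z\|_{\H}^2]$ through Gagliardo-Nirenberg, missing the targeted $|\lambda|^{-1/6}$ factor. One must therefore re-inject the $|\lambda|^{-1/3}$-decay of $\|w_{xx}\|_{L^2}$ and $\|W\|_{L^2}$ (available only once Lemma \ref{wW} has been proved) into \eqref{R3s}--\eqref{R4s} to gain an extra $|\lambda|^{1/3}$ on $\|w_{xxxx}\|_{L^2}$, and then through Lemma \ref{interpol} an additional $|\lambda|^{1/6}$ on $\|w_{xxx}\|_{L^2}$; only this two-stage improvement makes the Gagliardo-Nirenberg step sharp enough to reach \eqref{P3}--\eqref{P4}.
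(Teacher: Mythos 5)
Your proposal is correct and follows essentially the same route as the paper: both bootstrap Lemma \ref{wW} through \eqref{R3s}--\eqref{R4s} to get $\|W_{xx}\|_{L^2(\ell_0,\ell)},\|w_{xxxx}\|_{L^2(\ell_0,\ell)}\leq c|\lambda|^{2/3}\big[\|\z\|_{\H}+\|\hat\z\|_{\H}\big]$ and then combine Gagliardo--Nirenberg with $H^2$--$H^4$ interpolation and Young's inequality to obtain \eqref{P1}--\eqref{P4}. The only (cosmetic) difference is that you isolate the intermediate bound $\|w_{xxx}\|_{L^2(\ell_0,\ell)}\leq c|\lambda|^{1/6}\big[\|\z\|_{\H}+\|\hat\z\|_{\H}\big]$ as a separate step, while the paper folds the interpolation for $w_{xxx}$ directly into the Gagliardo--Nirenberg estimates; the exponents come out identically.
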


\begin{proof}
In what follows, we will tacitly employ several times  
the Gagliardo-Nirenberg inequality (see Lemma \ref{gagl}). 
It is also understood that we work
with $|\lambda|\geq1$.

First, in the light of \eqref{DISS} and Lemma \ref{wW}, we have
\begin{align*}
|W(\ell_0)|^2 \leq \|W\|_{L^\infty(\ell_0,\ell)}^2
&\leq c \|W\|_{L^2(\ell_0,\ell)}\|W_x\|_{L^2(\ell_0,\ell)}\\\noalign{\vskip0.8mm}
&\leq \frac{c}{|\lambda|^{1/3}}\big[\|\z\|_{\H}^{3/2}\|\hat \z \|_{\H}^{1/2} 
+ \|\z\|_{\H}^{1/2}\|\hat \z \|_{\H}^{3/2}\big] \\
&\leq \frac{c}{|\lambda|^{1/3}}\big[\|\z\|_{\H}^2 + \|\hat \z \|_{\H}^2\big].
\end{align*}
The latter estimate is nothing but \eqref{P1}.
Next, an exploitation of \eqref{R3s} together with Lemma \ref{wW} yield
\begin{equation}
\label{WXXest}
\|W_{xx}\|_{L^2(\ell_0,\ell)} \leq c |\lambda| \|w_{xx}\|_{L^2(\ell_0,\ell)} 
+ c\|\hat w_{xx}\|_{L^2(\ell_0,\ell)}
\leq c|\lambda|^{2/3}\big[\|\z\|_{\H} + \|\hat \z \|_{\H}\big].
\end{equation}
Therefore, invoking \eqref{DISS}, we infer that
\begin{align*}
|W_x(\ell_0)|^2 \leq \|W_x\|_{L^\infty(\ell_0,\ell)}^2
&\leq c \|W_x\|_{L^2(\ell_0,\ell)}\|W_{xx}\|_{L^2(\ell_0,\ell)}\\
&\leq c|\lambda|^{2/3}\big[\|\z\|_{\H}^{3/2}\|\hat \z \|_{\H}^{1/2} 
+ \|\z\|_{\H}^{1/2}\|\hat \z \|_{\H}^{3/2}\big]\\
&\leq c|\lambda|^{2/3}\big[\|\z\|_{\H}^2 + \|\hat \z \|_{\H}^2\big],
\end{align*}
which is exactly \eqref{P2}. We now observe that,
appealing to \eqref{R4s} together with \eqref{WXXest} and Lemma \ref{wW}, 
one can write 
\begin{align}
\label{wxxxxest2}
\|w_{xxxx}\|_{L^2(\ell_0,\ell)} &\leq c|\lambda|\|W\|_{L^2(\ell_0,\ell)}
+c\|W_{xx}\|_{L^2(\ell_0,\ell)} +\|\hat W \|_{L^2(\ell_0,\ell)}\\\nonumber
&\leq c|\lambda|^{2/3}\big[\|\z\|_{\H} + \|\hat \z \|_{\H}\big].
\end{align}
As a consequence, interpolating between $H^2(\ell_0,\ell)$ and
$H^4(\ell_0,\ell)$ (cf.\ Lemma \ref{interpol}), we get
\begin{align*}
|w_{xx}(\ell_0)|^2 \leq \|w_{xx}\|_{L^\infty(\ell_0,\ell)}^2
&\leq c \|w_{xx}\|_{L^2(\ell_0,\ell)}^2 + c \|w_{xx}\|_{L^2(\ell_0,\ell)}
\|w_{xxx}\|_{L^2(\ell_0,\ell)}\\
&\leq c \|w_{xx}\|_{L^2(\ell_0,\ell)}^2 + c \|w_{xx}\|_{L^2(\ell_0,\ell)}^{3/2}
\|w_{xxxx}\|_{L^2(\ell_0,\ell)}^{1/2}.
\end{align*}
Due to Lemma \ref{wW} and \eqref{wxxxxest2}, the right-hand side above is less
than or equal to 
$$
\frac{c}{|\lambda|^{1/6}}\big[\|\z\|_{\H}^2+ \|\z\|_{\H}^{3/2}\|\hat \z \|_{\H}^{1/2} 
+ \|\z\|_{\H}^{1/2}\|\hat \z \|_{\H}^{3/2} + \|\hat \z \|_{\H}^{2}\big]
\leq  \frac{c}{|\lambda|^{1/6}}\big[\|\z\|_{\H}^2+\|\hat \z \|_{\H}^{2}\big],
$$
and \eqref{P3} follows.
Similarly, using again Lemma \ref{wW} and \eqref{wxxxxest2}, we find
\begin{align*}
|w_{xxx}(\ell_0)|^2 &\leq \|w_{xxx}\|_{L^\infty(\ell_0,\ell)}^2\\\noalign{\vskip0.5mm}
&\leq c \|w_{xxx}\|_{L^2(\ell_0,\ell)}^2 + c \|w_{xxx}\|_{L^2(\ell_0,\ell)}
\|w_{xxxx}\|_{L^2(\ell_0,\ell)}\\
&\leq c \|w_{xx}\|_{L^2(\ell_0,\ell)}^2+
c \|w_{xx}\|_{L^2(\ell_0,\ell)}\|w_{xxxx}\|_{L^2(\ell_0,\ell)} 
+ c \|w_{xx}\|_{L^2(\ell_0,\ell)}^{1/2}\|w_{xxxx}\|_{L^2(\ell_0,\ell)}^{3/2}\\
&\leq c|\lambda|^{5/6}\big[\|\z\|_{\H}^2+ \|\z\|_{\H}^{3/2}\|\hat \z \|_{\H}^{1/2}
+\|\z\|_{\H}\|\hat \z \|_{\H}
+ \|\z\|_{\H}^{1/2}\|\hat \z \|_{\H}^{3/2} + \|\hat \z \|_{\H}^{2}\big]\\
&\leq c|\lambda|^{5/6}\big[\|\z\|_{\H}^2+\|\hat \z \|_{\H}^{2}\big].
\end{align*}
The lemma has been proved.
\end{proof}

\begin{lemma}
\label{POINT2similar}
For every $|\lambda|$ sufficiently large the inequality
$$
|v_{xx}(0)| \leq \frac{c}{|\lambda|^{1/2}}|v_{xxx}(0)|
+ \frac{c}{|\lambda|^{3/4}}\big[\|\z\|_{\H} + \|\hat \z \|_{\H}\big]
$$
holds for some structural constant $c>0$ independent of $\lambda$.
\end{lemma}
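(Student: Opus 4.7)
The strategy is to reduce the fourth-order equation satisfied by $v$ on $(0,\ell_0)$ to a scalar first-order ODE via an algebraic factorization. Setting $\mu:=|\lambda|^{1/2}$ and combining \eqref{R1s}-\eqref{R2s} to eliminate $V$, we obtain $v_{xxxx}-\mu^4 v=F$ on $(0,\ell_0)$ together with $v(0)=v_x(0)=0$, where $F:=\hat V+i\lambda\hat v$. Exploiting the factorization
$$
\partial_x^4-\mu^4=(\partial_x-\mu)(\partial_x^3+\mu\partial_x^2+\mu^2\partial_x+\mu^3)
$$
and introducing $\eta:=v_{xxx}+\mu v_{xx}+\mu^2 v_x+\mu^3 v$, we see that $\eta$ solves the first-order equation $\eta_x-\mu\eta=F$, while the clamped condition at $0$ reduces $\eta(0)$ to $v_{xxx}(0)+\mu v_{xx}(0)$---precisely the combination whose smallness, after division by $\mu$, yields the claim.

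Integrating against $e^{-\mu x}$ produces the representation
$$
v_{xxx}(0)+\mu v_{xx}(0)=e^{-\mu\ell_0}\eta(\ell_0)-\int_0^{\ell_0}e^{-\mu s}F(s)\,ds,
$$
and it remains to control both terms. For the boundary contribution I would apply the transmission conditions \eqref{E5} to express $\eta(\ell_0)$ in terms of $w,w_x,w_{xx},w_{xxx},W_x$ at $\ell_0$, and then combine Lemma~\ref{POINT1} with Gagliardo-Nirenberg and Poincar\'e-type bounds on $|w(\ell_0)|$ and $|w_x(\ell_0)|$ (obtainable from $w(\ell)=0$ and \eqref{wxest}). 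This yields the polynomial-in-$\mu$ bound $|\eta(\ell_0)|\leq c\mu(\|\z\|_{\H}+\|\hat\z\|_{\H})$, after which $e^{-\mu\ell_0}|\eta(\ell_0)|$ decays super-polynomially in $\mu$ and is harmless for $|\lambda|$ sufficiently large.

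The main obstacle is the forcing integral $\int_0^{\ell_0}e^{-\mu s}F(s)\,ds$, since $\|F\|_{L^2}$ is of the bad order $\mu^2\|\hat\z\|_{\H}$ owing to the $i\lambda\hat v$ summand. The $\hat V$ part is treated directly by Cauchy-Schwarz together with $\|e^{-\mu s}\|_{L^2(0,\ell_0)}\leq c\mu^{-1/2}$, giving $|\int_0^{\ell_0}e^{-\mu s}\hat V\,ds|\leq c\mu^{-1/2}\|\hat\z\|_{\H}$. The decisive trick is applied to the $i\lambda\hat v$ summand: two successive integrations by parts, made possible by $\hat v(0)=\hat v_x(0)=0$, transfer two derivatives onto $\hat v$ and gain a factor $\mu^{-2}$ modulo exponentially small boundary remainders at $\ell_0$, leaving the well-behaved integral $\mu^{-2}\int_0^{\ell_0}e^{-\mu s}\hat v_{xx}\,ds$. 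Another Cauchy-Schwarz then yields $|\int_0^{\ell_0}e^{-\mu s}\hat v\,ds|\leq c\mu^{-5/2}\|\hat\z\|_{\H}$, so that multiplication by $|\lambda|=\mu^2$ reduces the dangerous term to $c\mu^{-1/2}\|\hat\z\|_{\H}$. Collecting everything gives $|v_{xxx}(0)+\mu v_{xx}(0)|\leq c\mu^{-1/2}(\|\z\|_{\H}+\|\hat\z\|_{\H})$ for large $|\lambda|$; a triangle inequality and a division by $\mu$ then produce $|v_{xx}(0)|\leq\mu^{-1}|v_{xxx}(0)|+c\mu^{-3/2}(\|\z\|_{\H}+\|\hat\z\|_{\H})$, which is the claimed estimate.
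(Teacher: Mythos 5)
Your argument is correct, and it takes a genuinely different route from the paper. The paper factors $\partial_x^4-\lambda^2$ through two auxiliary variables, $\varphi_\lambda=v_{xx}-|\lambda| v$ and $\psi_\lambda=v_x+|\lambda|^{1/2}v$: it solves the oscillatory second-order equation for $\varphi_\lambda$ explicitly in terms of $v_{xx}(0)$, $v_{xxx}(0)$ and the data, then integrates the first-order equation for $\psi_\lambda$ against a decaying exponential, and only obtains the target combination after absorbing a term $\tfrac12|v_{xx}(0)|$ coming from $K_\lambda$. You instead use the factorization $\partial_x^4-\mu^4=(\partial_x-\mu)(\partial_x^3+\mu\partial_x^2+\mu^2\partial_x+\mu^3)$ with $\mu=|\lambda|^{1/2}$, so a single integration of the first-order equation for $\eta=v_{xxx}+\mu v_{xx}+\mu^2 v_x+\mu^3 v$ against $e^{-\mu x}$ delivers $\eta(0)=v_{xxx}(0)+\mu v_{xx}(0)$ directly, with no explicit oscillatory solution formula and no absorption step; the decisive device is the same in both proofs, namely two integrations by parts on the $i\lambda\hat v$ part of the forcing using $\hat v(0)=\hat v_x(0)=0$ to gain $\mu^{-2}$ and reach the $|\lambda|^{-3/4}$ rate. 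What your shortcut costs is that the boundary term at $\ell_0$ involves the higher traces $v_{xx}(\ell_0)$ and $v_{xxx}(\ell_0)$, so you must pass through the transmission conditions \eqref{E5} and Lemma~\ref{POINT1} (legitimate: that lemma precedes this one and its proof does not use the present estimate), whereas the paper only needs $\psi_\lambda(\ell_0)=v_x(\ell_0)+\mu v(\ell_0)$, controlled by $\|v_{xx}\|_{L^2(0,\ell_0)}\leq\|\z\|_{\H}$ alone. Since these contributions carry the factor $e^{-\mu\ell_0}$, any polynomial-in-$\mu$ bound suffices; your stated bound $|\eta(\ell_0)|\leq c\,\mu\,[\|\z\|_{\H}+\|\hat \z\|_{\H}]$ actually requires the refined estimates on $|w(\ell_0)|$ and $|w_x(\ell_0)|$ from \eqref{wxest} and Lemma~\ref{wW} (the crude terms $\mu^3|v(\ell_0)|$ and $\mu^2|v_x(\ell_0)|$ alone only give $c\,\mu^{3}$), but this is immaterial because the exponential factor kills any such power. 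Your final bookkeeping, $|\eta(0)|\leq c\,\mu^{-1/2}[\|\z\|_{\H}+\|\hat \z\|_{\H}]$ followed by division by $\mu$, reproduces exactly the claimed $|\lambda|^{-1/2}$ and $|\lambda|^{-3/4}$ rates.
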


\begin{proof}
The argument follows very closely the one of \cite[Lemma 3.10]{GRL} 
(see also \cite{LIULIU,SOZZO}).
Nevertheless, for the reader’s convenience, we report here the full proof.

Calling $\varphi_\lambda(x)=v_{xx}(x)-|\lambda|v(x)$ and combining
\eqref{R1s} with \eqref{R2s}, we easily find
$$\varphi_{\lambda xx} + |\lambda|\varphi_\lambda  = i \lambda \hat v + \hat V.$$
For $x \in (0,\ell_0)$ and $\lambda\neq0$, the solution of the equation above with the boundary conditions 
$\varphi_\lambda(0)=v_{xx}(0)$ and $\varphi_{\lambda x}(0)=v_{xxx}(0)$ can be written as
$$
\varphi_\lambda(x)= \frac{1}{2 |\lambda|^{1/2}} 
\big[J_{1\lambda}(x)+J_{2\lambda}(x)+iJ_{3\lambda}(x)+iJ_{4\lambda}(x)\big]
$$
having set
\begin{align*}
&J_{1\lambda}(x) = \big[|\lambda|^{1/2} v_{xx}(0)-i
v_{xxx}(0)\big] e^{i |\lambda|^{1/2}x},\\\noalign{\vskip2.5mm}
&J_{2\lambda}(x) = \big[|\lambda|^{1/2} v_{xx}(0)+ i
v_{xxx}(0)\big] e^{-i |\lambda|^{1/2}x},\\\noalign{\vskip1.5mm}
&J_{3\lambda}(x) =\int_0^x e^{i |\lambda|^{1/2}(s-x)} [i \lambda \hat v(s) + \hat V(s)] ds,\\
&J_{4\lambda}(x) =- \int_0^x e^{-i |\lambda|^{1/2}(s-x)} [i \lambda \hat v(s) + \hat V(s)] ds.
\end{align*}
Introducing the further variable $\psi_\lambda(x)=v_{x}(x)+|\lambda|^{1/2}v(x)$,
we infer that
$$\psi_{\lambda x}-|\lambda|^{1/2}\psi_\lambda=\varphi_\lambda.$$
Therefore, we obtain
\begin{align*}
\psi_\lambda(x) &= e^{|\lambda|^{1/2}(x-\ell_0)}\psi_\lambda(\ell_0) 
- \int_x^{\ell_{0}} e^{|\lambda|^{1/2}(x-s)}\varphi_\lambda(s) ds\\
&=e^{|\lambda|^{1/2}(x-\ell_0)}\psi_\lambda(\ell_0) 
-\int_x^{\ell_{0}} \frac{e^{|\lambda|^{1/2}(x-s)}}{2 |\lambda|^{1/2}} 
\big[J_{1\lambda}(s)+J_{2\lambda}(s)+iJ_{3\lambda}(s)+iJ_{4\lambda}(s)\big] ds.
\end{align*}
Choosing $x=0$ in the expression above and noting that $\psi_\lambda(0)=0$, we find
\begin{align*}
&\int_0^{\ell_{0}} e^{-|\lambda|^{1/2}s}
\big[J_{1\lambda}(s)+J_{2\lambda}(s)\big] ds\\
&= 2 |\lambda|^{1/2} e^{-|\lambda|^{1/2}\ell_0}\psi_\lambda(\ell_0)
-i\int_0^{\ell_{0}}e^{-|\lambda|^{1/2}s}
\big[J_{3\lambda}(s)+J_{4\lambda}(s)\big] ds.
\end{align*}
At this point, a straightforward computation yields the identity
$$
\int_0^{\ell_{0}} e^{-|\lambda|^{1/2}s}
\big[J_{1\lambda}(s)+J_{2\lambda}(s)\big] ds
= K_\lambda
+v_{xx}(0)
+\frac{v_{xxx}(0)}{|\lambda|^{1/2}},
$$
where
$$
K_\lambda=\frac{J_{1\lambda}(0) e^{(i-1)|\lambda|^{1/2}\ell_0}}{(i-1)|\lambda|^{1/2}}-
\frac{J_{2\lambda}(0) e^{-(i+1)|\lambda|^{1/2}\ell_0}}{(i+1)|\lambda|^{1/2}}.
$$
As a consequence, we have the bound
\begin{align*}
|v_{xx}(0)|&\leq \frac{1}{|\lambda|^{1/2}}|v_{xxx}(0)| + |K_\lambda|
+2 |\lambda|^{1/2} e^{-|\lambda|^{1/2}\ell_0}|\psi_\lambda(\ell_0)| \\
&\quad+\Big|\int_0^{\ell_{0}} e^{-|\lambda|^{1/2}s} 
\big[ J_{3\lambda}(s) + J_{4\lambda}(s) \big] ds\Big|.
\end{align*}
For every $|\lambda|$ sufficiently large, it is readily seen that
$$
|K_\lambda| \leq c e^{-|\lambda|^{1/2}\ell_0}
|v_{xx}(0)|+ \frac{c e^{-|\lambda|^{1/2}\ell_0} }{|\lambda|^{1/2}}|v_{xxx}(0)|
\leq  \frac12 |v_{xx}(0)|+ \frac{c}{|\lambda|^{1/2}}|v_{xxx}(0)|.$$
Moreover, invoking the Gagliardo-Nirenberg inequality, we also find
\begin{align*}
2 |\lambda|^{1/2} e^{-|\lambda|^{1/2}\ell_0}|\psi_\lambda(\ell_0)|
&\leq c |\lambda| e^{-|\lambda|^{1/2}\ell_0}\big[\|v_x\|_{L^\infty(0,\ell_0)}+
\|v\|_{L^\infty(0,\ell_0)}\big]\\\noalign{\vskip0.9mm}
&\leq c |\lambda| e^{-|\lambda|^{1/2}\ell_0}\|v_{xx}\|_{L^2(0,\ell_0)}\\\noalign{\vskip0.9mm}
&\leq c |\lambda| e^{-|\lambda|^{1/2}\ell_0}\|\z\|_\H\\
&\leq \frac{c}{|\lambda|^{3/4}}\|\z\|_\H,
\end{align*}
for every $|\lambda|$ sufficiently large.
This leads to 
\begin{align*}
|v_{xx}(0)|&\leq \frac{c}{|\lambda|^{1/2}}|v_{xxx}(0)| + \frac{c}{|\lambda|^{3/4}}\|\z\|_\H 
+\Big|\int_0^{\ell_{0}} e^{-|\lambda|^{1/2}s} \big[ J_{3\lambda}(s) + J_{4\lambda}(s) \big] ds\Big|.
\end{align*}
In order to finish the proof, we are left to show that 
\begin{equation}
\label{final}
\Big|\int_0^{\ell_{0}} e^{-|\lambda|^{1/2}s} \big[ J_{3\lambda}(s) + J_{4\lambda}(s) \big] ds\Big| \leq 
\frac{c}{|\lambda|^{3/4}}\|\hat \z\|_\H
\end{equation}
for every $|\lambda|$ sufficiently large. To this end, integrating by parts with 
$\hat v(0)=\hat v_{x}(0)=0$, we compute 
$$
\int_0^{\ell_{0}} e^{-|\lambda|^{1/2}s} J_{3\lambda}(s) ds
= P_{1\lambda} + P_{2\lambda}
+ P_{3\lambda},
$$
having set
\begin{align*}
&P_{1\lambda} = -\frac{e^{-(i+1)|\lambda|^{1/2}\ell_0}}{(i+1)|\lambda|^{1/2}}
\int_0^{\ell_0} e^{i |\lambda|^{1/2} s} [i \lambda \hat v(s) + \hat V(s)] ds,\\\noalign{\vskip2.5mm}
&P_{2\lambda} = -\frac{i\lambda}{(i+1)|\lambda|}\big[e^{-|\lambda|^{1/2} \ell_0} \hat v(\ell_0)+
\frac{e^{-|\lambda|^{1/2} \ell_0}}{|\lambda|^{1/2}} \hat v_x(\ell_0)\big],\\\noalign{\vskip1.5mm}
&P_{3\lambda} = \frac{i\lambda}{(i+1)|\lambda|^{3/2}}
\int_0^{\ell_0} e^{-|\lambda|^{1/2} s}\,\hat v_{xx}(s) ds + 
\frac{1}{(i+1)|\lambda|^{1/2}}
\int_0^{\ell_0} e^{-|\lambda|^{1/2} s}\,\hat V (s) ds.
\end{align*}
Using the Gagliardo-Nirenberg inequality and the H\"older inequality,
it is not difficult to check that for every $|\lambda|$ sufficiently large
\begin{align*}
&|P_{1\lambda}| \leq c|\lambda|^{1/2} e^{-|\lambda|^{1/2}\ell_0}\|\hat \z\|_\H\leq
\frac{c}{|\lambda|^{3/4}}\|\hat \z\|_\H,\\
&|P_{2\lambda}| \leq c e^{-|\lambda|^{1/2}\ell_0}\big[\|\hat v\|_{L^\infty(0,\ell_0)}+
\|\hat v_x\|_{L^\infty(0,\ell_0)}\big]
\leq \frac{c}{|\lambda|^{3/4}}\|\hat \z\|_\H,\\
&|P_{3\lambda}| \leq \frac{c}{|\lambda|^{3/4}}
\big[1- e^{-2|\lambda|^{1/2}\ell_0}\big]^{1/2}\big[\|\hat v_{xx}\|_{L^2(0,\ell_0)}+
\|\hat V\|_{L^2(0,\ell_0)}\big]
\leq \frac{c}{|\lambda|^{3/4}}\|\hat \z\|_\H.
\end{align*}
Therefore, we arrive at
$$
\Big|\int_0^{\ell_{0}} e^{-|\lambda|^{1/2}s} J_{3\lambda}(s) ds \Big| 
\leq 
\frac{c}{|\lambda|^{3/4}}\|\hat \z\|_\H.
$$
By the same token, we also have
$$
\Big|\int_0^{\ell_{0}} e^{-|\lambda|^{1/2}s} J_{4\lambda}(s) ds \Big| 
\leq 
\frac{c}{|\lambda|^{3/4}}\|\hat \z\|_\H,
$$
and \eqref{final} follows. 
\end{proof}

\begin{lemma}
\label{POINT2}
For every $|\lambda|\geq1$ the inequality
$$
|v_{xxx}(0)|^2 \leq c|\lambda|^{5/6}\big[\|\z\|_{\H}^2 + \|\hat \z \|_{\H}^2\big]
$$
holds for some structural constant $c>0$ independent of $\lambda$.
\end{lemma}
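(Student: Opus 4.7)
My plan is to adapt the analysis of Lemma~\ref{POINT2similar} to the pair $(v_{xx}(0),v_{xxx}(0))$, viewed as the initial data of the second-order ODE $\varphi_{\lambda xx}+|\lambda|\varphi_\lambda=i\lambda\hat v+\hat V$ on $(0,\ell_0)$ for $\varphi_\lambda=v_{xx}-|\lambda|v$. The Duhamel formula for $\varphi_\lambda$, evaluated together with $\varphi_{\lambda x}$ at $x=\ell_0$, yields a $2\times 2$ linear system with determinant $1$; inverting it gives the explicit identity
\begin{equation*}
v_{xxx}(0)=|\lambda|^{1/2}\sin(|\lambda|^{1/2}\ell_0)\bigl(\varphi_\lambda(\ell_0)-I_0\bigr)+\cos(|\lambda|^{1/2}\ell_0)\bigl(\varphi_{\lambda x}(\ell_0)-I_1\bigr),
\end{equation*}
where $I_0$ and $I_1$ denote the Duhamel integrals of $i\lambda\hat v+\hat V$ against the $\sin$- and $\cos$-kernels on $(0,\ell_0)$.

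The second summand is the easier one. By \eqref{R3s} and the transmission conditions at $\ell_0$, $|\varphi_{\lambda x}(\ell_0)|\leq|w_{xxx}(\ell_0)|+|W_x(\ell_0)|+|\lambda||w_x(\ell_0)|$, and Lemma~\ref{POINT1} bounds each piece by $c|\lambda|^{5/12}[\|\z\|_\H+\|\hat\z\|_\H]$; two integrations by parts inside $I_1$, permitted by $\hat v(0)=\hat v_x(0)=0$, estimate it by $c[\|\z\|_\H+\|\hat\z\|_\H]$. The delicate summand is the first. Expanding $|\lambda|v(\ell_0)=-i\,\mathrm{sign}(\lambda)(W(\ell_0)+\hat w(\ell_0))$ via \eqref{R3s} and the transmission $v(\ell_0)=w(\ell_0)$, the quantity $|\lambda|^{1/2}\varphi_\lambda(\ell_0)$ naively inherits an $O(|\lambda|^{1/2}\hat w(\ell_0))=O(|\lambda|^{1/2}\|\hat\z\|_\H)$ contribution, which would ruin the target exponent $5/6$.

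The crux is that the very same $|\lambda|^{1/2}\hat v(\ell_0)$ quantity resurfaces in $|\lambda|^{1/2}I_0$ after one integration by parts of $i\lambda\int_0^{\ell_0}\sin(|\lambda|^{1/2}(\ell_0-s))\hat v(s)\,ds$ using $\hat v(0)=0$; since the state space $\H$ enforces $\hat v(\ell_0)=\hat w(\ell_0)$, the two boundary contributions cancel exactly. After cancellation, $|\lambda|^{1/2}(\varphi_\lambda(\ell_0)-I_0)$ reduces to $|\lambda|^{1/2}w_{xx}(\ell_0)$, $|\lambda|^{1/2}W(\ell_0)$, and $|\lambda|^{1/2}$ times double-IBP integrals of $\hat v_{xx}$ and $\hat V$, each at most $c|\lambda|^{5/12}[\|\z\|_\H+\|\hat\z\|_\H]$ thanks to \eqref{P1}, \eqref{P3}, and Gagliardo--Nirenberg. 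Combining the two summands proves $|v_{xxx}(0)|\leq c|\lambda|^{5/12}[\|\z\|_\H+\|\hat\z\|_\H]$, hence the claim upon squaring.

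The main obstacle is precisely spotting and executing this cancellation: without recognizing that the $|\lambda|^{1/2}\hat w(\ell_0)$ piece of $|\lambda|^{1/2}\varphi_\lambda(\ell_0)$ is matched by the IBP boundary term $|\lambda|^{1/2}\hat v(\ell_0)$ inside $|\lambda|^{1/2}I_0$, the best one obtains is $|v_{xxx}(0)|^2\leq c|\lambda|[\|\z\|_\H^2+\|\hat\z\|_\H^2]$, which is too weak for the Gevrey exponent $\mu=1/24$ to be fed into Lemma~\ref{absgev}.
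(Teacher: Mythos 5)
Your proof is correct, but it follows a genuinely different route from the paper's. The paper proves Lemma \ref{POINT2} by a multiplier argument: it multiplies \eqref{R2s} by $v_{xxx}$ in $L^2(0,\ell_0)$, takes real parts and integrates by parts, so that $\tfrac12|v_{xxx}(0)|^2$ appears as a boundary term in an exact identity whose remaining ingredients are the traces at $\ell_0$ (converted through the transmission conditions and bounded via Lemma \ref{POINT1}, the worst one being $|v_{xxx}(\ell_0)|^2\leq c|\lambda|^{5/6}\big[\|\z\|_{\H}^2+\|\hat\z\|_{\H}^2\big]$) together with the interior norms $\|V_x\|_{L^2(0,\ell_0)}$ and $\|v_{xxx}\|_{L^2(0,\ell_0)}$, handled by interpolation (Lemma \ref{interpol}) through \eqref{R1s} and \eqref{R2s}. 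You instead solve the oscillatory ODE for $\varphi_\lambda=v_{xx}-|\lambda|v$ forward from $x=0$ (where $\varphi_\lambda(0)=v_{xx}(0)$, $\varphi_{\lambda x}(0)=v_{xxx}(0)$) and invert the determinant-one fundamental matrix at $\ell_0$; I checked your representation formula and the sign bookkeeping of the cancellation: the contribution $i\,\mathrm{sign}(\lambda)|\lambda|^{1/2}\hat w(\ell_0)$ produced by $-|\lambda|v(\ell_0)$ through $v(\ell_0)=w(\ell_0)$ and \eqref{R3s} is exactly offset by the boundary term $-i\,\mathrm{sign}(\lambda)|\lambda|^{1/2}\hat v(\ell_0)$ coming from one integration by parts in $|\lambda|^{1/2}I_0$, precisely because $\H$ enforces $\hat v(\ell_0)=\hat w(\ell_0)$; the surviving pieces ($|\lambda|^{1/2}w_{xx}(\ell_0)$, $|\lambda|^{1/2}W(\ell_0)$, $w_{xxx}(\ell_0)$, $W_x(\ell_0)$, and the twice-integrated-by-parts Duhamel terms) are all of order at most $|\lambda|^{5/12}\big[\|\z\|_\H+\|\hat\z\|_\H\big]$ by \eqref{P1}--\eqref{P4}, so squaring gives the stated exponent $5/6$. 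As for the trade-off: the paper's multiplier identity never attaches a positive power of $|\lambda|$ to $\hat w(\ell_0)$, so it needs no cancellation and never invokes the compatibility of $\hat\z$ at the interface, at the cost of the extra interpolation estimates for $\|V_x\|$ and $\|v_{xxx}\|$; your route dispenses with those interior estimates and, since its kernels are purely trigonometric (unlike the exponential ones in Lemma \ref{POINT2similar}), needs nothing beyond $|\lambda|\geq1$, but it is more delicate, standing or falling with the exact cancellation you correctly single out.
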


\begin{proof} 
Multiplying in $L^2(0,\ell_0)$ equation \eqref{R2s}
by $v_{xxx}$ and taking the real part, we obtain
$$
\Re \i \lambda \l V , v_{xxx} \r_{L^2(0,\ell_0)} + \Re \l v_{xxxx} , v_{xxx} \r_{L^2(0,\ell_0)} =
\Re \l \hat V , v_{xxx} \r_{L^2(0,\ell_0)}.
$$
After an elementary calculation, and with the aid of \eqref{R1s}, we find (the
horizontal bar stands for the complex conjugate)
$$
\Re \i \lambda \l V , v_{xxx} \r_{L^2(0,\ell_0)} 
= \Re \big[\i \lambda V(\ell_0) \overline{v_{xx}(\ell_0)}\big] +\frac12 |V_x(\ell_0)|^2 + 
\Re \l V_x, \hat v_{xx} \r_{L^2(0,\ell_0)}.
$$
In addition, it is readily seen that
$$
\Re \l v_{xxxx} , v_{xxx} \r_{L^2(0,\ell_0)}  
=\frac12|v_{xxx}(\ell_0)|^2-\frac12|v_{xxx}(0)|^2.
$$
Therefore, we get the bound
\begin{align*}
|v_{xxx}(0)|^2 &\leq c |v_{xxx}(\ell_0)|^2 + c|\lambda| |V(\ell_0)| |v_{xx}(\ell_0)| 
+ c |V_x(\ell_0)|^2 \\\noalign{\vskip0.5mm}
&\quad + c \|V_x\|_{L^2(0,\ell_0)}\|\hat \z\|_{\H}+\|v_{xxx}\|_{L^2(0,\ell_0)}\|\hat \z\|_{\H}. 
\end{align*}
We now estimate for $|\lambda|\geq1$ the terms appearing in the right-hand side above.
First we observe that, since 
$v_{xxx}(\ell_0)=w_{xxx}(\ell_0)-W_{x}(\ell_0)$, an exploitation \eqref{P2} and \eqref{P4} entails
$$
|v_{xxx}(\ell_0)|^2\leq c|\lambda|^{5/6}\big[\|\z\|_{\H}^2 + \|\hat \z \|_{\H}^2\big].
$$
Recalling also that $V(\ell_0)=W(\ell_0)$ and $v_{xx}(\ell_0)=w_{xx}(\ell_0)$, using \eqref{P1} and \eqref{P3}
we get
\begin{align*}
|\lambda| |V(\ell_0)| |v_{xx}(\ell_0)|  &\leq
c |\lambda|^{3/4} \big[\|\z\|_{\H}^2 +\|\z\|_{\H}\|\hat \z \|_{\H}+ \|\hat \z \|_{\H}^2\big]\\
&\leq c |\lambda|^{3/4} \big[\|\z\|_{\H}^2 +\|\hat \z \|_{\H}^2\big].
\end{align*}
Moreover, since $V_x(\ell_0)=W_x(\ell_0)$, in the light of \eqref{P2}
we have
$$
|V_x(\ell_0)|^2 \leq c|\lambda|^{2/3}\big[\|\z\|_{\H}^2 + \|\hat \z \|_{\H}^2\big].
$$
Next, interpolating between $L^2(0,\ell_0)$ and
$H^2(0,\ell_0)$ and owing to \eqref{R1s}, we obtain
\begin{align*}
\|V_x\|_{L^2(0,\ell_0)}\|\hat \z\|_{\H}  
&\leq c\|V\|_{L^2(0,\ell_0)}^{1/2} \|V_{xx}\|_{L^2(0,\ell_0)}^{1/2}\|\hat \z\|_{\H}\\
&\leq  c |\lambda|^{1/2}\big[\|\z\|_{\H}\|\hat \z\|_{\H} +\|\z\|_{\H}^{1/2}\|\hat \z\|_{\H}^{3/2}\big]
\\ 
&\leq c |\lambda|^{1/2}\big[\|\z\|_{\H}^2 +\|\hat \z\|_{\H}^2\big].
\end{align*}
Similarly, interpolating between $H^2(0,\ell_0)$ and
$H^4(0,\ell_0)$ and using \eqref{R2s}, we find
\begin{align*}
\|v_{xxx}\|_{L^2(0,\ell_0)} \|\hat \z\|_{\H} &\leq c\|v_{xx}\|_{L^2(0,\ell_0)}
\|\hat \z\|_{\H}+
c\|v_{xx}\|_{L^2(0,\ell_0)}^{1/2} \|v_{xxxx}\|_{L^2(0,\ell_0)}^{1/2} 
\|\hat \z\|_{\H}\\
&\leq  c |\lambda|^{1/2}\big[\|\z\|_{\H}\|\hat \z\|_{\H} +\|\z\|_{\H}^{1/2}\|\hat \z\|_{\H}^{3/2}\big]
\\ 
&\leq c |\lambda|^{1/2}\big[\|\z\|_{\H}^2 +\|\hat \z\|_{\H}^2\big].
\end{align*}
Collecting all the estimates above, and noting 
that the biggest power is $|\lambda|^{5/6}$, we arrive at the desired conclusion.
\end{proof}

\begin{lemma}
\label{vV}
For every $|\lambda|$ sufficiently large the inequality
$$
\|v_{xx}\|_{L^2(0,\ell_0)}^2 + \|V\|_{L^2(0,\ell_0)}^2 
\leq \frac{c}{|\lambda|^{1/12}}\big[\| \z\|_\H^2 + \|\hat  \z\|_\H^2 \big]
$$
holds for some structural constant $c>0$ independent of $\lambda$.
\end{lemma}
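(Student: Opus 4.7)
The plan is to apply the weighted multiplier $(\ell_0-x)\bar v_x$ to the resolvent equation \eqref{R2s} on the undamped interval $(0,\ell_0)$ and to take twice the real part of the resulting identity. Three integrations by parts, using the clamped conditions $v(0)=v_x(0)=V(0)=V_x(0)=0$ and the vanishing of the weight at $x=\ell_0$, should produce an identity of the form
$$
\|V\|^2_{L^2(0,\ell_0)} + 3\|v_{xx}\|^2_{L^2(0,\ell_0)} = \ell_0|v_{xx}(0)|^2 + 2\,\Re\bigl[v_{xx}(\ell_0)\,\overline{v_x(\ell_0)}\bigr] + \mathcal{D},
$$
where $\mathcal{D}$ is a data-dependent remainder built from interior integrals over $(0,\ell_0)$ that pair $\hat v, \hat V$ with $v, V$ and their derivatives.

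The two boundary terms are handled via the preceding lemmas. For $|v_{xx}(0)|^2$, inserting the bound $|v_{xxx}(0)|^2\leq c|\lambda|^{5/6}[\|\z\|^2_\H+\|\hat\z\|^2_\H]$ of Lemma~\ref{POINT2} into Lemma~\ref{POINT2similar} yields $|v_{xx}(0)|^2 \leq c|\lambda|^{-1/6}[\|\z\|^2_\H+\|\hat\z\|^2_\H]$. For the cross term at $\ell_0$, the transmission identities $v_{xx}(\ell_0)=w_{xx}(\ell_0)$ and $v_x(\ell_0)=w_x(\ell_0)$ move the analysis to the damped side: Lemma~\ref{POINT1} directly gives the bound on $|w_{xx}(\ell_0)|^2$, while Lemma~\ref{gagl} combined with Lemma~\ref{wW} supplies a bound on $|w_x(\ell_0)|^2$. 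Cauchy-Schwarz then produces a contribution of order $|\lambda|^{-3/4}[\|\z\|^2_\H+\|\hat\z\|^2_\H]$.

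For the data remainder $\mathcal{D}$, the crucial input is the identity $V=i\lambda v-\hat v$ from \eqref{R1s}, which immediately yields $\|v\|_{L^2(0,\ell_0)}\leq c|\lambda|^{-1}[\|\z\|_\H+\|\hat\z\|_\H]$. Combined with the Gagliardo-Nirenberg inequality $\|v_x\|^2_{L^2}\leq c\|v\|_{L^2}\|v_{xx}\|_{L^2}$, one obtains $\|v_x\|_{L^2(0,\ell_0)}\leq c|\lambda|^{-1/2}[\|\z\|_\H+\|\hat\z\|_\H]$, which is enough to control every term in $\mathcal{D}$ where $v_x$ is paired with data. The trickier contributions (typically of the form $\int V(\ell_0-x)\overline{\hat v_x}\,dx$) are treated by further integrations by parts, transferring derivatives onto the data, combined with a Young-type absorption of a small fraction of $\|V\|^2$ into the left-hand side. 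The net outcome is a bound $|\mathcal{D}|\leq c|\lambda|^{-1/12}[\|\z\|^2_\H+\|\hat\z\|^2_\H]$, which is the slowest decay among all right-hand side contributions and hence dictates the rate in the statement. The main obstacle lies precisely in this last step: a naive Cauchy-Schwarz applied to a term like $\int V\overline{\hat v_x}\,dx$ only gives an $O(\|\z\|_\H\|\hat\z\|_\H)$ bound, so extracting polynomial decay requires a delicate interplay between integration by parts, the relation $V=i\lambda v-\hat v$, and the Gagliardo-Nirenberg inequality.
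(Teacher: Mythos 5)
Your multiplier identity is exactly the one the paper uses (your weight $\ell_0-x$ is $-q$ with the paper's $q(x)=x-\ell_0$), and your treatment of the two boundary terms is sound: $|v_{xx}(0)|^2$ is handled through Lemmas~\ref{POINT2similar} and \ref{POINT2} exactly as in the paper, and your idea of pushing \emph{both} factors of the cross term $\Re[v_{xx}(\ell_0)\overline{v_x(\ell_0)}]$ to the damped side via $v_{x}(\ell_0)=w_x(\ell_0)$ is legitimate and in fact sharper than the paper, which bounds $|v_x(\ell_0)|\leq c\|v_{xx}\|_{L^2(0,\ell_0)}\leq c\|\z\|_\H$ with no decay and therefore gets only $|\lambda|^{-1/12}$ from this term --- the term that actually dictates the exponent $1/12$ in the statement. (To reach your claimed $|\lambda|^{-3/4}$ you need $\|w_x\|_{L^2(\ell_0,\ell)}\leq c|\lambda|^{-1}[\|\z\|_\H+\|\hat\z\|_\H]$, i.e.\ estimate \eqref{wxest} coming from \eqref{R3s} and \eqref{DISS}, not merely the statement of Lemma~\ref{wW} plus Lemma~\ref{gagl}, which give only $|\lambda|^{-5/12}$ for the product; either variant decays faster than $|\lambda|^{-1/12}$, so this is harmless.)

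The genuine gap is in the data remainder, precisely at the term $\Re\l V, q\hat v_x\r_{L^2(0,\ell_0)}$ that you yourself single out as the main obstacle. You assert $|\mathcal{D}|\leq c|\lambda|^{-1/12}[\|\z\|_\H^2+\|\hat\z\|_\H^2]$ via integration by parts, the relation $V=i\lambda v-\hat v$, Gagliardo--Nirenberg and Young absorption, but none of these tools produces decay here: substituting $V=i\lambda v-\hat v$ from \eqref{R1s} leaves the undecayed piece $\int \hat v\, q\,\overline{\hat v_x}\,dx=O(\|\hat\z\|_\H^2)$; integrating by parts directly puts a derivative on $V$, and $\|V_x\|_{L^2(0,\ell_0)}$ grows like $|\lambda|^{1/2}$; and a Young absorption $\eps\|V\|^2+C_\eps\|\hat\z\|_\H^2$ leaves a $\|\hat\z\|_\H^2$ with no $\lambda$-factor, which after combining with Lemma~\ref{wW} only yields $\|\z\|_\H\leq c\|\hat\z\|_\H$, i.e.\ resolvent boundedness (exponential stability) but not the Gevrey rate. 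The missing idea --- what the paper actually does --- is to use the \emph{second} resolvent equation \eqref{R2s} rather than \eqref{R1s}: write $V=\tfrac{1}{i\lambda}(\hat V-v_{xxxx})$ to gain a factor $|\lambda|^{-1}$, integrate by parts once so that $\l v_{xxxx},q\hat v_x\r$ becomes $\l v_{xxx},\hat v_x+q\hat v_{xx}\r$ (the boundary terms vanish since $q(\ell_0)=0$ and $\hat v_x(0)=0$), and then interpolate $\|v_{xxx}\|_{L^2(0,\ell_0)}\leq c|\lambda|^{1/2}[\|\z\|_\H+\|\hat\z\|_\H]$; this gives $O(|\lambda|^{-1/2})$, not $O(|\lambda|^{-1/12})$. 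This also shows your bookkeeping is off: the slowest contribution is not $\mathcal{D}$ but the boundary cross term (in the paper's treatment), and with your sharper cross-term bound the natural outcome of your scheme would be the rate $|\lambda|^{-1/6}$ dictated by $|v_{xx}(0)|^2$. Your treatment of the other data term $\Re\l\hat V,qv_x\r$ is fine: $\|v\|_{L^2}\lesssim|\lambda|^{-1}$ plus interpolation gives $\|v_x\|_{L^2}\lesssim|\lambda|^{-1/2}$, matching the paper's $O(|\lambda|^{-1/2})$ bound by a slightly different route.
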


\begin{proof}
Setting $q(x)=x-\ell_0$ for $x\in(0,\ell_0)$, we
multiply in $L^2(0,\ell_0)$ equation \eqref{R2s}
by $q v_{x}$. Taking the real part of the resulting identity, we obtain
$$
\Re \i \lambda \l V , q v_x \r_{L^2(0,\ell_0)} + \Re \l v_{xxxx} , q v_x \r_{L^2(0,\ell_0)} =
\Re \l \hat V , q v_x \r_{L^2(0,\ell_0)}.
$$
Exploiting \eqref{R1s} and after an elementary calculation, we find
$$
\Re \i \lambda \l V , q v_x \r_{L^2(0,\ell_0)} 
= \frac12\|V\|^2_{L^2(0,\ell_0)} - \Re \l V , q\hat v_x \r_{L^2(0,\ell_0)}. 
$$
Moreover, we compute (as before the horizontal bar stands for the complex conjugate)
\begin{align*}
\Re \l v_{xxxx} , q v_x \r_{L^2(0,\ell_0)}  
= -\Re \big[v_{xx}(\ell_0) \overline{v_{x}(\ell_0)}\big]
+\frac32\|v_{xx}\|_{L^2(0,\ell_0)}^2- \frac{\ell_0}{2}|v_{xx}(0)|^2.
\end{align*}
As a consequence, we find
\begin{align*}
&\frac32\|v_{xx}\|_{L^2(0,\ell_0)}^2 + \frac12\big\|V\|^2_{L^2(0,\ell_0)}\\\nonumber
&= \frac{\ell_0}{2}|v_{xx}(0)|^2 +\Re \big[v_{xx}(\ell_0) \overline{v_{x}(\ell_0)}\big]
+ \Re \l \hat V, q v_x \r_{L^2(0,\ell_0)}+\Re \l V , q\hat v_x \r_{L^2(0,\ell_0)}.
\end{align*}
Collecting Lemmas \ref{POINT2similar} and \ref{POINT2}, it is readily seen that
for every $|\lambda|$ sufficiently large 
$$
\frac{\ell_0}{2}|v_{xx}(0)|^2 \leq 
\frac{c}{|\lambda|}|v_{xxx}(0)|^2
+ \frac{c}{|\lambda|^{3/2}}\big[\|\z\|_{\H}^2 + \|\hat \z \|_{\H}^2\big]\leq
\frac{c}{|\lambda|^{1/6}}\big[\|\z\|_{\H}^2 + \|\hat \z \|_{\H}^2\big].
$$
Recalling that 
$v_{xx}(\ell_0)=w_{xx}(\ell_0)$, and exploiting 
the Gagliardo-Nirenberg inequality together with \eqref{P3}, we also obtain
\begin{align*}
|\Re \big[v_{xx}(\ell_0) \overline{v_{x}(\ell_0)}\big]| 
& \leq \frac{c}{|\lambda|^{1/12}} \big[\| \z\|_\H + \|\hat  \z\|_\H \big]\|v_x\|_{L^\infty(0,\ell_0)}\\
& \leq \frac{c}{|\lambda|^{1/12}} \big[\| \z\|_\H + \|\hat  \z\|_\H \big]\|v_{xx}\|_{L^2(0,\ell_0)}\\
& \leq \frac{c}{|\lambda|^{1/12}} \big[\| \z\|_\H^2 + \|\hat  \z\|_\H^2 \big].
\end{align*}
Therefore, we arrive at the bound
\begin{align}
\label{quasifine}
&\|v_{xx}\|_{L^2(0,\ell_0)}^2 + \|V\|_{L^2(0,\ell_0)}^2\\\nonumber
&\leq \frac{c}{|\lambda|^{1/12}}\big[\| \z\|_\H^2 + \|\hat  \z\|_\H^2 \big]
+ |\Re \l \hat V, q v_x \r_{L^2(0,\ell_0)}| + |\Re \l V, q \hat v_x \r_{L^2(0,\ell_0)}|.
\end{align}
Next, in the light of \eqref{R1s}, we estimate 
\begin{align*}
|\Re \l \hat V, q v_x \r_{L^2(0,\ell_0)}|
&\leq \frac{c}{|\lambda|}\big[\|V_x\|_{L^2(0,\ell_0)}+\|\hat v_{xx}\|_{L^2(0,\ell_0)}\big]\|\hat V\|_{L^2(0,\ell_0)}\\
&\leq  \frac{c}{|\lambda|} \|V_x\|_{L^2(0,\ell_0)}\|\hat \z \|_{\H} + \frac{c}{|\lambda|} \|\hat \z \|_{\H}^2.
\end{align*}
Interpolating between $L^2(0,\ell_0)$ and
$H^2(0,\ell_0)$ and invoking again \eqref{R1s}, we also have
$$
\|V_x\|_{L^2(0,\ell_0)}\leq c \|V\|_{L^2(0,\ell_0)}^{1/2}\|V_{xx}\|_{L^2(0,\ell_0)}^{1/2}
\leq c|\lambda|^{1/2}\|\z\|_{\H} + c \|\z\|_{\H}^{1/2}\|\hat \z\|_{\H}^{1/2}.
$$
In conclusion, we find
\begin{align}
\label{R1xxx}
|\Re \l \hat V, q v_x \r_{L^2(0,\ell_0)}|&\leq \frac{c}{|\lambda|^{1/2}}
\big[\|\z\|_{\H}\|\hat \z\|_{\H}+\|\z\|_{\H}^{1/2}\|\hat \z\|_{\H}^{3/2}+\|\hat \z\|_{\H}^2\big]\\\nonumber
&\leq \frac{c}{|\lambda|^{1/2}}
\big[\|\z\|_{\H}^2 + \|\hat \z\|_{\H}^2\big].
\end{align}
At this point, owing to  \eqref{R2s}, we compute
\begin{align*}
|\Re \l V, q \hat v_x \r_{L^2(0,\ell_0)}|
&\leq \frac{1}{|\lambda|}|\Re \l v_{xxxx}, q \hat v_x \r_{L^2(0,\ell_0)}\big|
+ \frac{c}{|\lambda|} \|\hat V \|_{L^2(0,\ell_0)}\|\hat v_{xx} \|_{L^2(0,\ell_0)}\\
&= \frac{1}{|\lambda|}|\Re \l v_{xxx}, \hat v_x + q\hat v_{xx} \r_{L^2(0,\ell_0)}|
+ \frac{c}{|\lambda|} \|\hat V \|_{L^2(0,\ell_0)}\|\hat v_{xx} \|_{L^2(0,\ell_0)}\\
&\leq \frac{c}{|\lambda|} \|v_{xxx}\|_{L^2(0,\ell_0)} \| \hat \z \|_\H+ \frac{c}{|\lambda|} \|\hat \z \|_{\H}^2.
\end{align*}
Interpolating between $H^2(0,\ell_0)$ and
$H^4(0,\ell_0)$ and exploiting again \eqref{R2s},  we have
\begin{align*}
\|v_{xxx}\|_{L^2(0,\ell_0)}&\leq 
c\|v_{xx}\|_{L^2(0,\ell_0)} + c\|v_{xx}\|_{L^2(0,\ell_0)}^{1/2}\|v_{xxxx}\|_{L^2(0,\ell_0)}^{1/2}\\
&\leq c |\lambda|^{1/2}\big[\|\z\|_{\H} + \|\z\|_{\H}^{1/2}\|\hat \z\|_{\H}^{1/2}\big]\\
&\leq c |\lambda|^{1/2}\big[\|\z\|_{\H} + \|\hat \z\|_{\H}\big].
\end{align*}
Therefore, we arrive at
$$
|\Re \l V, q \hat v_x \r_{L^2(0,\ell_0)}|
\leq \frac{c}{|\lambda|^{1/2}} \big[\|\z\|_{\H} \|\hat \z\|_{\H}+\|\hat \z\|_{\H}^2 \big]
\leq \frac{c}{|\lambda|^{1/2}} \big[\|\z\|_\H^2+\|\hat \z\|_\H^2 \big].
$$
Plugging \eqref{R1xxx} and the estimate above into \eqref{quasifine} we reach the thesis.
\end{proof}

We are now in a position to complete the proof of Theorem \ref{MAINTH}. Indeed, collecting Lemmas \ref{wW} and
\ref{vV}, we infer that
$$
\|\z\|_\H^2 \leq \frac{c}{|\lambda|^{1/12}}\big[\| \z\|_\H^2 + \|\hat  \z\|_\H^2 \big]\leq
\frac12 \| \z\|_\H^2+  \frac{c}{|\lambda|^{1/12}} \|\hat  \z\|_\H^2,
$$
for every $|\lambda|$ sufficiently large. Therefore
$$
\|\z\|_\H\leq \frac{c}{|\lambda|^{1/24}} \|\hat  \z\|_\H,
$$
which in turns yields (recall that $\sigma(\A) \cap \i \R = \emptyset$) 
$$
\limsup_{|\lambda|\to\infty} |\lambda|^{1/24}\|(i\lambda - \A)^{-1}\|_{L(\H)}<\infty.
$$
The latter is nothing but \eqref{limsup} with $\mu=1/24$, as desired. 
\qed

\begin{Acknowledgments}
The research of M.\ Caggio leading to these results has received funding from 
the Czech Sciences Foundation (GA\v CR), 22-01591S.  Moreover,  M.\ Caggio   
has been supported by  Praemium Academiae of \v S.\ Ne\v casov\' a. CAS 
is supported by RVO:67985840. F. Dell'Oro  is member of 
Gruppo Nazionale per l'Analisi Matematica, la Probabilit\`a e le loro Applicazioni (GNAMPA), 
Istituto Nazionale di Alta Matematica.
\end{Acknowledgments}




\begin{thebibliography}{99}

\bibitem{ADAMS}
{\au R.A. Adams and J. Fournier},
{\bk Sobolev spaces. Second edition},
\eds{Elsevier/Academic Press}{Amsterdam}{2003. xiv+305 pp}

\bibitem{AHR}
{\au K. Ammari, F. Hassine and L. Robbiano},
{\ti Stabilization for vibrating plate with singular structural damping},
{\jou Discrete Contin.\ Dyn.\ Syst.\ Ser.\ S} (in press).

\bibitem{AB}
{\au W. Arendt and C.J.K. Batty},
{\ti Tauberian theorems and stability of one-parameter semigroups},
{\jou Trans.\ Amer.\ Math.\ Soc.}
\no{306}{837--852}{1988}

\bibitem{BARRAZA}
{\au B. Barraza Martinez, R. Denk, J. Hern\'andez Monz\'on, F. Kammerlander and M. Nendel},
{\ti Regularity and asymptotic behavior for a damped plate-membrane transmission problem},
{\jou J.\ Math.\ Anal.\ Appl.}
\no{474}{1082--1103}{2019}

\bibitem{BELA}
{\au B. Belinskiy and I. Lasiecka},
{\ti Gevrey's and trace regularity of a semigroup associated 
with beam equation and non-monotone boundary conditions},
{\jou J.\ Math.\ Anal.\ Appl.}
\no{332}{137--154}{2007}

\bibitem{BURQ}
{\au N. Burq},
{\ti D\'ecroissance de l'\'energie locale de l'\'equation des ondes pour le probl\`eme 
ext\'erieur et absence de r\'esonance au voisinage du r\'eel},
{\jou Acta Math.}
\no{180}{1--29}{1998}

\bibitem{CT}
{\au S. Chen and R. Triggiani},
{\ti Gevrey class semigroups arising from elastic systems 
with gentle dissipation:\ the case $0<\alpha<\tfrac12$},
{\jou Proc.\ Amer.\ Math.\ Soc.}
\no{110}{401--415}{1990}

\bibitem{DENK}
{\au R. Denk and F. Kammerlander}, 
{\ti Exponential stability for a coupled system of damped-undamped plate equations},
{\jou IMA J.\ Appl.\ Math.}
\no{83}{302--322}{2018}

\bibitem{ENG}
{\au K.-J. Engel, R. Nagel},
{\bk One-parameter semigroups for linear evolution equations},
\eds{Springer-Verlag}{New York}{2000}

\bibitem{GER}
{\au L. Gearhart},
{\ti Spectral theory for contraction semigroups on Hilbert space},
{\jou Trans.\ Amer.\ Math.\ Soc.}
\no{236}{385--394}{1978}

\bibitem{GRL}
{\au G. G\'omez \'Avalos, J.E. Mu{\~n}oz Rivera and Z. Liu} ,
{\ti Gevrey class of locally dissipative Euler-Bernoulli beam equation},
{\jou SIAM J. Control Optim.}
\no{59}{2174--2194}{2021}

\bibitem{Kat}
{\au T. Kato},
{\bk Perturbation theory for linear operators},
\eds{Springer-Verlag}{New York}{1980}

\bibitem{KOM}
{\au V. Komornik},
{\bk Exact controllability and stabilization. The multiplier method},
\eds{Masson}{Paris}{1994}

\bibitem{LAG}
{\au J. Lagnese},
{\bk Boundary stabilization of thin plates},
\eds{SIAM}{Philadelphia}{1989}

\bibitem{LAS1}
{\au I. Lasiecka},
{\ti Exponential decay rates for the solutions of Euler-Bernoulli 
equations with boundary dissipation occurring in the moments only},
{\jou  J.\ Differential Equations}
\no{95}{169--182}{1992}

\bibitem{LAS}
{\au I. Lasiecka},
{\bk Mathematical control theory of coupled PDEs},
\eds{Society for Industrial and Applied Mathematics (SIAM)}{Philadelphia, PA}{2002}

\bibitem{LIULIU}
{\au K. Liu and Z. Liu}, 
{\ti Exponential decay of energy of the Euler-Bernoulli 
beam with locally distributed Kelvin-Voigt damping},
{\jou SIAM J. Control. Optim.}
\no{36}{1086--1098}{1998}

\bibitem{PAZ}
{\au A. Pazy},
{\bk Semigroups of linear operators and applications to partial differential equations},
\eds{Springer-Verlag}{New York}{1983}

\bibitem{PRU}
{\au J. Pr\"{u}ss},
{\ti On the spectrum of $\text{C}_0$-semigroups},
{\jou Trans.\ Amer.\ Math.\ Soc.}
\no{284}{847--857}{1984}

\bibitem{SOZZO}
{\au B. Sozzo and J.E. Mu{\~n}oz Rivera},
{\ti The Gevrey class of the Euler-Bernoulli beam model}, 
{\jou J.\ Math.\ Anal.\ Appl.} 
\no{505}{Paper No.\ 125619, 13 pp}{2022}

\bibitem{TAYLOR}
{\au S. Taylor}
{\bk Ph.D.\ Thesis, Chapter ``Gevrey semigroups"},
School of Mathematics, University of Minnesota, 1989.

\bibitem{TEB}
{\au L. Tebou},
{\ti Well-posedness and stability of a hinged plate equation 
with a localized nonlinear structural damping},
{\jou Nonlinear Anal.} 
\no{71}{e2288--e2297}{2009}

\end{thebibliography}
\end{document}